\numberwithin{equation}{section}  
\newcommand*{\bigcdot}{}
\DeclareRobustCommand*{\bigcdot}{%
  \mathbin{\mathpalette\bigcdot@{}}%
}
\newcommand*{\bigcdot@scalefactor}{.5}
\newcommand*{\bigcdot@widthfactor}{1.15}
\newcommand*{\bigcdot@}[2]{%
  \sbox0{$#1\vcenter{}$}
  \sbox2{$#1\cdot\m@th$}%
  \hbox to \bigcdot@widthfactor\wd2{%
    \hfil
    \raise\ht0\hbox{%
      \scalebox{\bigcdot@scalefactor}{%
        \lower\ht0\hbox{$#1\bullet\m@th$}%
      }%
    }%
    \hfil
  }%
}
\begin{document}

\newtheorem*{theob}{Theorem B}



\newcommand{\zxz}[4]{\begin{pmatrix} #1 & #2 \\ #3 & #4 \end{pmatrix}}
\newcommand{\abcd}{\zxz{a}{b}{c}{d}}
\newcommand{\kzxz}[4]{\left(\begin{smallmatrix} #1 & #2 \\ #3 &
#4\end{smallmatrix}\right) }
\newcommand{\kabcd}{\kzxz{a}{b}{c}{d}}




\newcommand{\A}{{\mathbb A}}
\newcommand{\C}{{\mathbb C}}
\newcommand{\E}{{\mathbb E}}
\newcommand{\F}{{\mathbb F}}
\newcommand{\G}{{\mathbb G}}
\newcommand{\R}{{\mathbb R}}
\newcommand{\Q}{{\mathbb Q}}
\newcommand{\X}{{\mathbb X}}
\newcommand{\Z}{{\mathbb Z}}
\newcommand{\HZ}{\widehat{\Z}}
\newcommand{\Sp}{\hbox{Sp}}

\newcommand{\rom}[1]{\text{\rm #1}}
\renewcommand{\roman}{\rm}

\newcommand{\Aut}{\text{\rm Aut}}
\newcommand{\CH}{\widehat{\text{\rm CH}}}
\newcommand{\cha}{{\text{\rm char}}}
\newcommand{\CHe}{\text{\rm CHeeg}}
\newcommand{\degh}{\widehat{\text{\rm deg}}}
\newcommand{\degH}{\widehat{\text{\rm deg}}}    
\newcommand{\diag}{{\text{\rm diag}}}
\newcommand{\Diff}{\text{\rm Diff}}
\newcommand{\disc}{\text{\rm discr}}
\renewcommand{\div}{\text{\rm div}}
\newcommand{\divh}{\widehat{\text{\rm div}}}
\newcommand{\DS}{\text{\rm DS}}
\newcommand{\Ei}{\text{\rm Ei}}
\newcommand{\End}{\text{\rm End}}
\newcommand{\ev}{{\text{\rm ev}}}
\newcommand{\Gal}{\text{\rm Gal}}
\newcommand{\GL}{\text{\rm GL}}
\newcommand{\GSpin}{\text{\rm GSpin}}
\newcommand{\Hom}{\text{\rm Hom}}
\newcommand{\hor}{{\text{\rm horiz}}}
\newcommand{\id}{\text{\rm id}}
\newcommand{\im}{\text{\rm im}}
\renewcommand{\Im}{\text{\rm Im}}
\newcommand{\inv}{{\text{\rm inv}}}
\newcommand{\Jac}{\text{\rm Jac}}
\newcommand{\Leray}{{\mathrm L}}
\newcommand{\Lie}{\text{\rm Lie}}
\newcommand{\Mp}{\text{\rm Mp}}
\newcommand{\mult}{\text{\rm mult}}
\newcommand{\MW}{\text{\rm MW}}
\newcommand{\MWt}{\widetilde{\MW}}
\newcommand{\new}{\text{\rm new}}
\newcommand{\Nm}{\text{\rm Nm}}
\newcommand{\ord}{\text{\rm ord}}
\newcommand{\PGL}{\text{\rm PGL}}
\newcommand{\Pic}{\text{\rm Pic}}
\newcommand{\Pich}{\widehat{\text{\rm Pic}}}
\newcommand{\pr}{\text{\rm pr}}
\newcommand{\ra}{\text{\rm ra}}
\newcommand{\Rao}{\mathrm R}
\renewcommand{\Re}{\text{\rm Re}}
\newcommand{\sgn}{\text{\rm sgn}}
\newcommand{\sig}{\text{\rm sig}}
\newcommand{\SL}{\text{\rm SL}}
\newcommand{\SO}{\text{\rm SO}}
\renewcommand{\Sp}{\text{\rm Sp}}
\newcommand{\Spec}{\text{\rm Spec}\, }
\newcommand{\Spf}{\text{\rm Spf}}
\newcommand{\supp}{\text{\rm supp}}
\newcommand{\Sym}{{\text{\rm Sym}}}
\newcommand{\tr}{\text{\rm tr}}
\newcommand{\type}{\text{\rm type}}
\newcommand{\Ver}{\text{\rm Vert}}
\newcommand{\vol}{\text{\rm vol}}
\newcommand{\Wald}{\text{\rm Wald}}


\newcommand{\Cal}{\mathcal}     

\newcommand{\AHH}{\hat{\Cal A}}   
\newcommand{\CHH}{\hat{\Cal C}}
\newcommand{\MM}{\Cal D}          
\newcommand{\MMb}{\MM^\bullet}
\newcommand{\ssplit}{\text{\bf split}}
\newcommand{\whcc}{\widehat{\Cal C}}
\newcommand{\CO}{\mathcal O}
\newcommand{\COH}{\widehat{\CO}}
\newcommand{\M}{\Cal M}
\newcommand{\OB}{\Cal O_B}
\newcommand{\XX}{\mathcal X}
\newcommand{\bXX}{\bar\XX}
\newcommand{\wc}{\hat{\Cal C}}
\newcommand{\wch}{\wc^{\text{\rm hor}}}
\newcommand{\ZZ}{\Cal Z}
\newcommand{\ZH}{\widehat{\Cal Z}}   
\newcommand{\Zh}{\widehat{\Cal Z}}
\newcommand{\ZZh}{\ZZ^{\text{\rm hor}}}
\newcommand{\ZZv}{\ZZ^{\text{\rm ver}}}
\newcommand{\ZZhh}{\Zh^{\text{\rm hor}}}
\newcommand{\ZZhv}{\Zh^{\text{\rm ver}}}


\newcommand{\nass}{\noalign{\smallskip}}
\newcommand{\snass}{\noalign{\vskip 2pt}}
\newcommand{\tent}[1]{ \vphantom{\vbox to #1pt{}} }   


\newcommand{\scr}{\scriptstyle}
\newcommand{\disp}{\displaystyle}

\font\cute=cmitt10 at 12pt
\font\smallcute=cmitt10 at 9pt
\newcommand{\kay}{{\text{\cute k}}}
\newcommand{\smallkay}{{\text{\smallcute k}}}

\renewcommand{\a}{\alpha}
\renewcommand{\b}{\beta}
\newcommand{\e}{\epsilon}
\renewcommand{\l}{\lambda}
\renewcommand{\L}{\Lambda}
\renewcommand{\o}{\omega}
\renewcommand{\O}{\Omega}
\renewcommand{\P}{\Phi}
\newcommand{\ph}{\varphi}
\newcommand{\phih}{\widehat{\phi}}
\newcommand{\wphi}{\widehat{\phi}}
\newcommand{\phit}{\widetilde{\phi}}
\newcommand{\s}{\sigma}
\newcommand{\vth}{\vartheta}


%

\newcommand{\Pt}{P}
\newcommand{\Ph}{\P}
\newcommand{\Pht}{\tilde \P}   
\newcommand{\Kt}{K}           
\newcommand{\Mt}{M}

\newcommand{\pht}{\widetilde{\phi}}
\newcommand{\It}{I}
\newcommand{\Jt}{\widetilde{J}}
\newcommand{\lt}{\widetilde{\l}}
\newcommand{\vp}{\varpi}

\newcommand{\bom}{{\boldsymbol{\o}}}
\newcommand{\hbom}{\widehat{\bom}}
\newcommand{\ff}{{\bold f}}
\newcommand{\fsp}{\boldsymbol{f}_{\!\rm sp}}
\newcommand{\fev}{\boldsymbol{f}_{\!\rm ev}}
\newcommand{\fb}{\boldsymbol{f}}
\newcommand{\J}{\und{J}'}
\newcommand{\JJ}{\bold J'}
\newcommand{\V}{\bold V}
\newcommand{\xx}{\bold x}

\newcommand{\g}{{\mathfrak g}}
\renewcommand{\H}{\mathfrak H}


\newcommand{\back}{\backslash}
\newcommand{\CT}[1]{\operatornamewithlimits{CT}_{#1}}
\renewcommand{\d}{\partial}
\newcommand{\db}{\bar\partial}
\newcommand{\dbar}{\bar{\partial}}
\newcommand{\gs}[2]{\langle \,#1,#2\,\rangle}
\newcommand{\Gt}{G}
\newcommand{\hfal}{h_{\text{\rm Fal}}}
\newcommand{\II}{\int^\bullet}
\newcommand{\isoarrow}{\ {\overset{\sim}{\longrightarrow}}\ }
\newcommand{\lisoarrow}{\ {\overset{\sim}{\longleftarrow}}\ }
\newcommand{\limdir}[1]{\underset{\underset{#1}{\rightarrow}}{\lim}}
\newcommand{\lan}{\operatorname{\langle}\hskip .5pt}
\newcommand{\ran}{\,\operatorname{\rangle}}
\newcommand{\lra}{\longrightarrow}
\newcommand{\doublelra}{\ {\overset{\scr\lra}{\scr\lra}}\ }
\newcommand{\nat}{\natural}
\newcommand{\notmid}{\mkern-5mu\not\mkern5mu\mid}
\newcommand{\Optoc}{\text{\rm Opt}(O_{c^2d},O_B)}
\newcommand{\psim}{\psi^{-}}
\newcommand{\qeq}{\ \overset{??}{=}\ }
\newcommand{\sh}{\sharp}
\newcommand{\thCH}{\theta^{\text{\rm ar}}}
\newcommand{\wht}{\widehat{\theta}}     
\newcommand{\triv}{1\!\!1}
\renewcommand{\tt}{\otimes}
\newcommand{\und}[1]{\underline{#1}}
\newcommand{\z}{z}  

\newcommand{\thMW}{\theta^{\text{\rm ar}}}
\newcommand{\tph}{\widetilde{\widehat\phi_1}}
\newcommand{\Pet}{\text{\rm Pet}}





\newcommand{\thing}{ \raisebox{-6.4pt}{$\tilde{\tilde{}}$}  }   
\newcommand{\smallthing}{ \raisebox{-4.4pt}{$\scr\tilde{\tilde{}}$}  }
\newcommand{\ttilde}[1]{\overset{\smash{\thing}}{#1}}
\newcommand{\smallttilde}[1]{\overset{\smash{\smallthing}}{#1}}
\newcommand{\downhookarrow}{\hbox{$\downarrow\hskip -6.1pt\raisebox{6pt}{$\cap$}$}}


\newcommand{\hfb}{\hfill\break}
\newcommand{\margincom}[1]{\marginpar{\bf\raggedright #1}}
\newcommand{\Sec}{\S}


\numberwithin{equation}{section}
\setcounter{section}{0}
\setcounter{MaxMatrixCols}{15}


\newtheorem{theo}{Theorem}[section]
\newtheorem{lem}[theo]{Lemma}
\newtheorem{prop}[theo]{Proposition}
\newtheorem{cor}[theo]{Corollary}
\newtheorem*{main}{Main Theorem}
\newtheorem*{maina}{Main Theorem A}
\newtheorem*{mainb}{Main Theorem B}
\newtheorem*{atheo}{Theorem A}
\newtheorem{conj}[theo]{Conjecture}
\theoremstyle{remark}
\newtheorem{rem}[theo]{Remark}      
\newtheorem{example}[theo]{Example}
\theoremstyle{definition}
\newtheorem{defn}[theo]{Definition}

\newcommand{\OO}{\text{\rm O}}
\newcommand{\UU}{\text{\rm U}}

\newcommand{\OK}{O_{\smallkay}}
\newcommand{\DI}{\mathcal D^{-1}}

\newcommand{\pre}{\text{\rm pre}}

\newcommand{\Bor}{\text{\rm Bor}}
\newcommand{\Rel}{\text{\rm Rel}}
\newcommand{\rel}{\text{\rm rel}}
\newcommand{\Res}{\text{\rm Res}}
\newcommand{\TG}{\widetilde{G}}

\newcommand{\OL}{O_{\Lambda}}
\newcommand{\OLB}{O_{\Lambda,B}}

\newcommand{\p}{\varpi}

\newcommand{\cutter}{\vskip .1in\hrule\vskip .1in}

\parindent=0pt
\parskip=10pt
\baselineskip=16pt

\newcommand{\PP}{\mathcal P}
\renewcommand{\OO}{\mathcal O}
\newcommand{\BB}{\mathbb B}
\newcommand{\OBB}{O_{\BB}}
\newcommand{\Max}{\text{\rm Max}}
\newcommand{\Opt}{\text{\rm Opt}}
\newcommand{\OH}{O_H}

\newcommand{\phhat}{\widehat{\phi}}
\newcommand{\thetahat}{\widehat{\theta}}

\newcommand{\lbold}{\text{\boldmath$\l$\unboldmath}}
\newcommand{\abold}{\text{\boldmath$a$\unboldmath}}
\newcommand{\cbold}{\text{\boldmath$c$\unboldmath}}
\newcommand{\ebold}{\text{\boldmath$e$\unboldmath}}
\newcommand{\aabold}{\text{\boldmath$\a$\unboldmath}}
\newcommand{\gbold}{\text{\boldmath$g$\unboldmath}}
\newcommand{\hbold}{\text{\boldmath$h$\unboldmath}}
\newcommand{\obold}{\text{\boldmath$\o$\unboldmath}}
\newcommand{\fbold}{\text{\boldmath$f$\unboldmath}\!}
\newcommand{\rbold}{\text{\boldmath$r$\unboldmath}}
\newcommand{\ffbold}{\und{\fbold}}
\newcommand{\sbold}{\text{\boldmath$\s$\unboldmath}}
\newcommand{\tbold}{\text{\boldmath$t$\unboldmath}}
\newcommand{\qbold}{\text{\boldmath$q$\unboldmath}}
\newcommand{\gammabold}{\text{\boldmath$\gamma$\unboldmath}}

\newcommand{\zbold}{\text{\boldmath$z$\unboldmath}}
\newcommand{\mbold}{\text{\boldmath$m$\unboldmath}}
\newcommand{\chibold}{\text{\boldmath$\chi$\unboldmath}}

\newcommand{\boldCC}{\text{\boldmath$\mathcal C$\unboldmath}}
\newcommand{\kbold}{\text{\boldmath$k$\unboldmath}}
\newcommand{\fbolds}{\fbold\,}

\newcommand{\deltaBB}{\delta_{\BB}}
\newcommand{\kappaBB}{\kappa_{\BB}}
\newcommand{\aboldBB}{\abold_{\BB}}
\newcommand{\lboldBB}{\lbold_{\BB}}
\newcommand{\gboldBB}{\gbold_{\BB}}
\newcommand{\bbold}{\text{\boldmath$\b$\unboldmath}}

\newcommand{\phbold}{\text{\boldmath$\ph$\unboldmath}}

\newcommand{\fff}{\phi}
\newcommand{\spp}{\text{\rm sp}}

\newcommand{\pob}{\mathfrak p_{\bold o}}
\newcommand{\kob}{\mathfrak k_{\bold o}}
\newcommand{\gob}{\mathfrak g_{\bold o}}
\newcommand{\pobp}{\mathfrak p_{\bold o +}}
\newcommand{\pobm}{\mathfrak p_{\bold o -}}

\newcommand{\mmm}{\mathfrak m}


\newcommand{\bb}{\frak b}

\newcommand{\bbbold}{\text{\boldmath$b$\unboldmath}}

\renewcommand{\ll}{\,\frak l}
\newcommand{\uC}{\underline{\Cal C}}
\newcommand{\uZZ}{\underline{\ZZ}}
\newcommand{\B}{\mathbb B}
\newcommand{\CL}{\text{\rm Cl}}

\newcommand{\pp}{\frak p}

\newcommand{\OKp}{O_{\smallkay,p}}

\renewcommand{\top}{\text{\rm top}}

\newcommand{\bF}{\bar{\mathbb F}_p}

\newcommand{\beq}{\begin{equation}}
\newcommand{\eeq}{\end{equation}}


\newcommand{\Dl}{\Delta(\l)}
\newcommand{\mm}{{\bold m}}

\newcommand{\FD}{\text{\rm FD}}
\newcommand{\LDS}{\text{\rm LDS}}

\newcommand{\dcM}{\dot{\Cal M}}
\newcommand{\bpm}{\begin{pmatrix}}
\newcommand{\epm}{\end{pmatrix}}

\newcommand{\GW}{\text{\rm GW}}

\newcommand{\uk}{\bold k}
\newcommand{\uo}{\text{\boldmath$\o$\unboldmath}}

\newcommand{\ww}{\bold w}
\newcommand{\yy}{\bold y}

\newcommand{\Om}{\text{\boldmath$\Omega$\unboldmath}}

\newcommand{\phnat}{{}^\nat\phi}
\newcommand{\om} {\text{\boldmath$\o$\unboldmath}}
\newcommand{\omnat}{\breve\om}

\newcommand{\bino}[2]{{#1\choose{#2}}}

\newcommand{\kk}{\frak k}

\newcommand{\gw}[2]{\langle\langle #1,#2 \rangle\rangle}
\newcommand{\gwrm}{\text{\rm gw}}

\newcommand{\cpar}{\sbold}


\newcommand{\N}{\Cal N}
\newcommand{\GG}{\Cal G}
\newcommand{\GGu}{\und{\GG}}
\newcommand{\LGG}{\Cal L \Cal G}

\newcommand{\car}{\text{\rm char}}

\newcommand{\Con}{\text{\rm Con}}

\newcommand{\h}{\mathfrak h}
\newcommand{\prz}{{\bf prz}}
\newcommand{\hc}{\text{\rm hc}}

\renewcommand{\z}{\mathfrak z}     
\newcommand{\x}{\und{x}}
\newcommand{\har}{\text{\rm har}}

\newcommand{\qq}{\qbold}
\newcommand{\qqq}{\mathfrak q}
\newcommand{\Skew}{\text{\rm Skew}}

\newcommand{\Dp}{D^+}
\newcommand{\Djp}{D^{(j),+}}
\newcommand{\ch}{\text{\rm CH}}
\renewcommand{\ra}{\rightarrow}
\newcommand{\wtg}{\widetilde{G'}}    

\newcommand{\Sb}{\mathcal S^{\bigcdot}}

\newcommand{\SC}{\text{\rm SC}}
\newcommand{\Ze}{\mathfrak Z^\nat}
\newcommand{\Zer}{\mathfrak Z}
\newcommand{\bH}{\breve{\mathcal H}}
\newcommand{\bcbold}{\cbold}
\newcommand{\btriv}{\triv}
\newcommand{\Rep}{\text{\rm Rep}}
\newcommand{\Repp}{\Rep_+}
\newcommand{\Vp}{V_+}
\newcommand{\Gp}{G_+}
\newcommand{\HSC}{H}
\newcommand{\ndeg}{\deg^{\text{\rm tot}}}
\newcommand{\trivn}{\triv^\nat}
\newcommand{\cboldn}{\cbold^\nat}

\newcommand{\now}{\count0=\time 
\divide\count0 by 60
\count1=\count0
\multiply\count1 by 60
\count2= \time
\advance\count2 by -\count1
\the\count0:\the\count2}


\author{ Stephen S. Kudla}

\title{On the subring of special cycles}

\maketitle

\section{Introduction}\label{section1}

In this paper, we investigate the structure of the ring generated by the cohomology classes of special cycles 
in orthogonal Shimura varieties over totally real fields. 
For a quadratic space $V$ of dimension $m+2$ over a totally real field $F$ of degree $d$ over $\Q$ with signature
\beq\label{def-sig-V}
\sig(V) = (m,2)^{d_+}\times (m+2,0)^{d-d_+},
\eeq
the special cycles and the subring they generate in Chow groups and in cohomology of the associated Shimura variety $\text{\rm Sh}(V)$
were considered in  \cite{kudla.rem-gen}. The case where $d_+=1$ was previously studied in \cite{K.duke}. 
In the present paper, we consider the subring of cohomology, equipping it with
an inner product given by the cup product of classes 
followed by a degree map on the top cohomology. In particular, classes not in complementary degrees pair to zero. 
The reduced ring of special cycles $SC^\bullet(V)$ is obtained by taking the quotient of the special cycle ring by the radical of this pairing.  
In order to work with compact quotients, we assume that $d_+<d$ or that $V$ is anisotropic.  We prove two main results. 

First, as a consequence of the Siegel-Weil formula, we show that the inner product of elements of $SC^\bullet(V)$ is determined by 
the Fourier coefficients of pullbacks  
of Hilbert-Siegel-Eisenstein series to products of Hilbert-Siegel subspaces under embeddings
\beq\label{diag-2}
\H_{n_1}^d\times \H_{n_2}^d\ \lra \ \H_m^d,\quad  [\tau_1,\tau_2]\mapsto\bpm \tau_1&{}\\{}&\tau_2\epm \qquad n_1+n_2=m.
\eeq
Moreover, 
the products in the ring $SC^\bullet(V)$ are completely determined by the Fourier coefficients of pullbacks  
of Hilbert-Siegel-Eisenstein series to triple products of Hilbert-Siegel subspaces   
respect to embeddings
\beq\label{diag-3}
\H_{n_1}^d\times \H_{n_2}^d\times \H_{n_3}^d \lra \H_m^d, \quad [\tau_1,\tau_2,\tau_3] \mapsto \bpm \tau_1&{}&{}\\{}&\tau_2&{}\\
{}&{}&\tau_3\epm,\qquad n_1+n_2+n_3 =m.
\eeq
Here $\H_n$ denotes the Siegel upper half space of genus $n$.  Precise statements are given in Theorem~A in section 2. 

Second, as a consequence of the `matching principle of \cite{K.Bints}, we show that for quadratic spaces $V$ and $V'$ of dimension $m+2$ over $F$
that are isomorphic at all finite places, the reduced special cycle rings are isometrically isomorphic, $SC^\bullet(V) \simeq SC^\bullet(V')$, Theorem~B in Section 1. Note that the associated Shimura 
varieties may have different dimensions, since $d_+(V)$ and $d_+(V')$ need not be equal, although they have the same parity. 
Since the special cycles occur in codimensions $nd_+$, the isomorphism can involve a shift in degrees as well. 

Finally, when $d_+(V)$ is even, there is an associated totally positive definite space $\Vp$ such that $V$ and $\Vp$ are isomorphic at all finite 
places. This space is unique up to isometry. In Section~\ref{section4}, we give a combinatorial construction, involving representation numbers, of a graded ring $\SC^\bullet(\Vp)$ of `special cycles' associated to $\Vp$
and show that there is a comparison isomorphism $\SC^\bullet(V) \isoarrow \SC^\bullet(\Vp)$.  In particular, the cohomological 
special cycle ring of the Shimura variety $\text{\rm Sh}(V)$ for $d_+(V)$ even has a purely combinatorial description. 

Our proof of these comparison results is indirect and depends on the Siegel-Weil formula, which might be regarded as a weak type of relative trace formula. 
It would be of interest to find a more direct geometric proof.

It should be noted that we work on orthogonal Shimura varieties with many connected components and with ad\`elic weighted special cycles 
on them. It seems possible that our results can be refined to cover the cohomological special cycle rings of the individual components of these varieties, 
but this would require a `twisted' variant of the Siegel-Weil formula involving the automorphic characters of special orthogonal 
groups obtained as a composition of the spinor norm with quadratic Hecke characters of $F$.  A few hints at such a formula occur in the literature, 
cf. \cite{snitz} and \cite{gan-snitz}.  For example, the result of \cite{snitz} is used in \cite{KRYbook} to isolate the degrees of $0$-cycles 
on individual connected components of Shimura curves over $\Q$. 

Finally, it should be straightforward to extend the results of this paper and of \cite{kudla.rem-gen} to the case of unitary groups of 
signature $(m,1)^{d_+}\times (m+1,0)^{d-d_+}$. The only exception is the modularity of the Chow group valued generating series, proved in 
\cite{kudla.rem-gen} to be a consequence of the Bloch-Beilinson conjecture, since the proof there depends on a combination of the embedding trick 
with a vanishing result for odd Betti numbers in relatively small degree. This vanishing result is a peculiarity of the group $\SO(m,2)$, 
and is not available for the unitary case. 

In we next two sections of the introduction, we give a more detailed description of our results. In Section~\ref{section1.1} we review the notation and results of \cite{kudla.rem-gen}.
In Section~\ref{subsec-1.2}, we give more precise statements.    

\subsection{Background}\label{section1.1}
For a totally real field $F$ of degree $d$, let $\Sigma=\{\s\}$ be the set of embeddings of $F$ into $\R$.  
For $(V,Q)$ a quadratic space of dimension $m+2$ over $F$ and $\s\in \Sigma$, let $V_\s=V\tt_{F,\s}\R$, and suppose that the signature 
of $V$ is given by 
\beq\label{def-sigV}
\sig(V_\s)= \begin{cases} (m,2)&\text{if $\s\in \Sigma_+(V)$,}\\
\nass
(m+2,0)&\text{otherwise,}
\end{cases}
\eeq
for a subset $\Sigma_+=\Sigma_+(V)$ of $\Sigma$ with $|\Sigma_+|=d_+$. 
We assume that $1\le d_+<d$ or that $V$ is anisotropic, leaving aside the problem of extending our results to the non-compact case. 

Let $G= R_{F/\Q}\GSpin(V)$ and let 
\beq\label{DVS}
D = \prod_{\s\in \Sigma_+} D^{(\s)}, \qquad D^{(\s)} = \{ \, z \in \text{\rm Gr}_2^o(V_\s) \mid \ Q\mid_z <0\,\},
\eeq
where $\text{\rm Gr}_2^o(V_\s)$ is the Grassmannian of oriented $2$-planes in $V_\s$. For a compact open subgroup 
$K \subset G(\A_f)$, let 
$$S_K = G(\Q)\back D\times G(\A_f)/K,$$
and recall that $S_K$ is isomorphic to the set of complex points of a projective variety which is smooth if $K$ is neat. The canonical model is defined over a reflex field determined by $\Sigma_+$, but 
we will not need this for the moment. 
The Chow groups and (Betti) cohomology groups (with complex coefficients) of these varieties, as $K$ varies, form an inverse system, and we define
$$
\ch^{k}(S)= \varinjlim\limits_K\, \ch^{k}(S_K)\qquad\text{and}\qquad
H^{k}(S)= \varinjlim\limits_K\, H^{k}(S_K), 
$$
and graded rings
$$\ch^{\bullet}(S) = \bigoplus_{k=0}^{md_+}\ch^k(S), \qquad\text{and}\qquad H^\bullet(S) = \bigoplus_{k=0}^{2m d_+} H^{k}(S)$$
under intersection product and cup product respectively. 
The group $G(\A_f)$ acts naturally on these rings. Of course, we will only be concerned with a subring of classes of Hodge type $(p,p)$. 

As explained in \cite{K.duke}, p.45, we have 
$$\pi_0(S_K) \simeq G(\Q)_+\back G(\A_f)/K \simeq F^\times_{\A_f}/F^\times_+ \nu(K),$$
where $\nu:G\ra R_{F/\Q}\G_m$ is the spinor norm and $F^\times_+$ is the group of totally positive elements in $F^\times$. Thus 
\beq\label{H0}
H^0(S) = C(F^\times_{\A_f}/F^\times_+,\C) = \varinjlim\limits_K \, C(F^\times_{\A_f}/F^\times_+ \nu(K),\C)
\eeq
is the space of continuous complex valued functions on $F^\times_{\A_f}/F^\times_+$.  In particular, there is a distinguished element $\triv\in H^0(S)$ 
given by the constant function $1$, which gives the identity element of the ring $H^\bullet(S)$. We also  write $\triv$ for the analogous class in $\ch^0(S)$. 
\begin{rem}\label{rem-chi-classes}   A little more generally, for any character $\chibold$ of $F^\times_{\A}/F^\times F^\times_{\infty,+} \nu(K)$, we obtain, via (\ref{H0}),  a class $\chibold_K \in H^0(S_K)$ and 
a class $\chibold\in H^0(S)$. Of course there are analogous classes, which we denote by the same symbol, in $\ch^0(S_K)\tt_\Q E(\chibold)$ and $\ch^0(S)\tt_\Q E(\chibold)$, where $E(\chibold)$ is the field generated by the values
of $\chibold$.  
\end{rem}

The weighted special cycles are defined in Sections 5 and 10 of \cite{kudla.rem-gen}.  For $1\le n\le m$, a Schwartz function $\ph\in S(V(\A_f)^n)^K$ and $T\in \Sym_n(F)$, 
there are classes $Z(T,\ph,K) \in \ch^{nd_+}(S_K)$ and their images $[Z(T,\ph,K)] \in H^{2nd_+}(S_K)$ under the cycle class map\footnote{ 
Note the slight shift in notation from that of \cite{kudla.rem-gen} where the image was denoted by $cl([Z(T,\ph,K)] )$.}
$$\text{cl}_k: \ch^k(S_K) \lra H^{2k}(S_K).$$
For example, for $T=0$, 
$$Z(0,\ph,K) = \ph(0)\,\cbold_{S_K}^n,$$
where $\cbold_{S_K}$ is the `co-tautological' Chern class, \cite{kudla.rem-gen} (2.3). 
The weighted cycles are compatible with pullback. For a subgroup $K'\subset K$ and the resulting projection $\pr: S_{K'} \ra S_K$, we have 
$\pr^*(Z(T,\ph,K) )= Z(T,\ph,K')$. 
Thus, there are classes in the direct limits
$$
Z(T,\ph) \in \ch^{nd_+}(S)\qquad\text{and}\qquad [Z(T,\ph)] \in H^{2n d_+}(S).
$$
 
One of the main results of \cite{kudla.rem-gen}, Proposition~5.2,  is the following product formula. For $T_i\in \Sym_{n_i}(F)$ and 
$\ph_i\in S(V(\A_f)^{n_i})$, 
\beq\label{prod-ch}
Z(T_1,\ph_1)\cdot Z(T_2,\ph_2) = \sum_{\substack{ T\in \Sym_{n_1+n_2}(F)_{\ge 0}\\ \snass T = \bpm \scr T_1&*\\{}^t*&\scr T_2\epm}} Z(T,\ph_1\tt\ph_2).
\eeq
There is a corresponding formula for the cup product of the cohomology classes $[Z(T_1,\ph_1)]$ and $[Z(T_2,\ph_2)]$. 
Thus, the span of the special cycle classes, together with the class $\triv$ for $n=0$, define subrings
which we call {\it special cycle class rings}. 

\begin{rem}\label{twisted-SCs} Using the classes $\chibold_K \in H^0(S_K)$ and $\chibold \in H^0(S)$, we can define slight variants of the weighted cycles
\begin{align*}
Z(T,\ph,K,\chibold) &= Z(T,\ph,K)\cdot \chibold_K\quad\, \in \ch^0(S_K)\tt_\Q E(\chibold),\\ 
\nass 
Z(T,\ph, \chibold) &= Z(T,\ph)\cdot \chibold \qquad\quad \ \in \ch^0(S)\tt_\Q E(\chibold),
\end{align*}
and their analogues in cohomology, where we are shifting by a character of the component group.  
By taking suitable linear combinations of these cycles we can obtain special cycles supported on a given connected component. 
\end{rem}

There a (formal) generating series for special cycle classes. For $\tau= (\tau_\s)_{\s\in \Sigma} \in \H_n^d$, where $\H_n$ is the Siegel space of genus $n$, 
and $T\in \Sym_n(F)$, let 
$$\qq^T = e(\,\sum_\s \tr(\s(T)\tau_\s)\,), \qquad e(t) = e^{2\pi i t}.$$
Then, for $\ph\in S(V(\A_f)^n)$,  define
\beq\label{CH-gen-fun}
\phi_n(\tau;\ph) = \sum_{T\in \Sym_n(F)_{\ge 0}} Z(T,\ph)\cdot \qq^T \ \in \ \ch^{nd_+}(S)[[\qq]],
\eeq
a formal power series with coefficients in the Chow group, and the corresponding series 
\beq\label{B-gen-fun}
\phi^B_n(\tau;\ph) = \sum_{T\in \Sym_n(F)_{\ge 0}} [Z(T,\ph)]\cdot \qq^T \ \in \ H^{2nd_+}(S)[[\qq]],
\eeq
with coefficients in the Betti cohomology groups. 
The series $\phi_n^B(\tau;\ph)$ is the $\qq$-expansion of a 
Hilbert-Siegel modular form of parallel weight $(\frac{m}2+1, \dots, \frac{m}2+1)$. 
As shown in \cite{kudla.rem-gen}, the modularity of the Chow group valued series $\phi_n(\tau;\ph)$ follows from the Bloch-Beilinson conjecture about injectivity of the 
Abel-Jacobi map.  

The Betti version of the product formula (\ref{prod-ch}) is equivalent to the identity
$$\phi^B_n(\bpm \tau_1&{}\\{}&\tau_2\epm, \ph_1\tt\ph_2) = \phi^B_{n_1}(\tau_1;\ph_1)\cdot \phi^B_{n_2}(\tau_2,\ph_2)$$
on Hilbert-Siegel modular forms, where the product on the right side is given by the cup product in $H^\bullet(S)$.  

\subsection{Results}\label{subsec-1.2}
It is notable that the weight of the generating series $\phi_n^B(\tau;\ph)$ depends only on $m$ and not on $d_+$. 
Similarly, $d_+$ plays almost no role in the structure of the product formula (\ref{prod-ch}) or its analogue in cohomology.  
This suggests that there may be further relations among the subrings of cohomology and of Chow groups associated to 
the Shimura varieties with differing archimedean data.  
We sometimes write $\text{Sh}(V)$ or $\text{Sh}(V)_K$ rather than 
$S$ or $S_K$ for the Shimura variety associated to $V$, when we want to vary $V$. 

Since our access to the structure of these rings will be via intersection numbers, we introduce `reduced' or `numerical' versions. 
Since our results concern only the  ring in cohomology, we will now restrict our discussion to that case and,  
for convenience, we write 
$$\SC^\bullet(V)^\nat\subset H^{2\bullet d_+}(\text{Sh}(V))$$
for the subring of special cycle classes. 

On the top degree cohomology group $H^{2md_+}(S_K)$ there is a normalized degree map
\beq\label{def-deg-K}
\deg_K: H^{2md_+}(S_K) \lra \C, \qquad z= [\b] \mapsto \vol(K/K\cap Z(\Q)) \,\int_{S_K} \b,
\eeq
where $\b$ is a degree $2md_+$-form on $S_K$ representing the class $z$. Here $Z$ is the identity component of the center of $G$
and $\vol(K)$ is computed with respect to 
a Haar measure on $G(\A_f)$ that will be specified in Section~\ref{subsec2.1} below. With this normalization, the degree is invariant under pullback for the 
coverings $\pr:S_{K'}\ra S_K$, for $K'\subset K$ and hence gives a well defined map 
\beq\label{def-deg}
\deg: H^{2md_+}(S) \lra \C.
\eeq
We extend this map by zero on $H^k(S)$ for $k<2md_+$. 

Define an inner product on the cohomology ring $H^{\bullet}(S)$ by 
$$\gs{Z_1}{Z_2} := \deg(Z_1\cdot Z_2),$$
and consider its restriction to subring of special cycles. 
By associativity of the cup product, the radical of this pairing on $\SC^\bullet(V)^\nat$ is an ideal.  We 
define the {\it reduced ring of special cycles} 
\beq\label{red-ring}
\SC^\bullet(V) = \SC^\bullet(V)^\nat/\text{Rad}
\eeq  
as the quotient of $\SC^\bullet(V)^\nat$ by this ideal.
The form $\gs{}{}$ then defines a 
non-degenerate pairing on $\SC^\bullet(V)$. 
We let 
$$z(T;\ph) = \text{ the image of $[Z(T;\ph)]$ in $\SC^\bullet(V)$,}$$ 
and write $z(T;\ph) =  z_V(T;\ph) $ if we want to keep track of the space $V$. 
Note that, by definition, $\SC^0(V)^\nat = \C\, \triv$, for the class $\triv$ defined above. Thus 
$\ker(\deg:\SC^m(V)^\nat \ra \C)$ is the intersection of $\SC^m(V)^\nat$ with the radical
and hence $\SC^m(V) = \C\,\triv^\vee$ for a class $\triv^\vee$ with $\gs{\triv}{\triv^\vee}=1$. 
For example, if $T\in \Sym_m(F)_{>0}$ and $\ph\in S(V(\A_f)^n)^K$, then $Z(T,\ph)$ is a weighted $0$-cycle on $S_K$, and 
$$z(T,\ph) = \deg(Z(T,\ph))\cdot \triv^\vee.$$

We want to consider how the ring $\SC^\bullet(V)$ varies with $V$.  

Our main result concerns the case where only the archimedean part of $V$ varies. 
Note that the real quadratic spaces of signatures $(m+2,0)$ and $(m,2)$ 
both have determinant $1$ and have Hasse invariants $+1$ and $-1$ respectively. Also recall that the character $\chi_V$ of a quadratic 
space over $F$ is define by 
$$\chi_V(x) = (x, (-1)^s\det(V))_F,\qquad x\in F^\times_\A,$$
where $s= \frac12\dim V (\dim V+1)$ and $(\,,\,)_F$ is the global quadratic Hilbert symbol. For quadratic spaces $V$ and $V'$ of dimension $m+2$ over $F$ with 
$\chi_V = \chi_{V'}$, 
we say that $V\cong_f V'$
if there is an isometry $V'_{\pp} \simeq V_\pp$ for each finite place $\pp$ of $F$. Here $V_\pp = V\tt_F F_\pp$.  
For spaces $V$ and $V'$ with $V\cong_f V'$, we fix an isomorphism $V(\A_f) \simeq V'(\A_f)$ and hence obtain isomorphisms 
\beq\label{naive-matching}
\rho^n_{V,V'}:S(V(\A_f)^n) \simeq S(V'(\A_f)^n), \qquad \ph \mapsto \ph',
\eeq
for all $n$, compatible with tensor products and with the action of the metaplectic group via the Weil 
representation. Note that, by the product formula for the Hasse invariant,  $V\cong_f V'$ implies that $d_+(V)$ and $d_+(V')$ have the same parity
but are otherwise unconstrained. The following result will be proved in Section 3 as a consequence of Theorem~A of Section 2 and the matching principle. 
 
\begin{theob}\label{thmA} Suppose that $V\cong_f V'$ and that 
$1\le d_+(V), d_+(V') <d$. \hfb
Then 
the map 
$$\SC^\bullet(V) \lra \SC^\bullet(V'), \qquad z_V(T,\ph) \mapsto z_{V'}(T,\ph'),\qquad \ph'= \rho^n_{V,V'}(\ph),$$
is well defined.   Moreover, this map is an isometry and a ring isomorphism. 
\end{theob}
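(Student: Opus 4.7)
The plan is to reduce everything to Theorem~A, which computes every inner product in the special cycle ring as a single Fourier coefficient of a Hilbert-Siegel-Eisenstein series restricted along the embedding (\ref{diag-2}), and then to invoke the matching principle of \cite{K.Bints} to identify these Fourier coefficients across $V$ and $V'$. Once this identification is in place, well-definedness on the quotient, the isometry property, multiplicativity, and bijectivity all follow in turn by formal arguments.

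First I would establish the isometry statement at the \emph{unreduced} level. For any $\ph_i \in S(V(\A_f)^{n_i})$ with $n_1+n_2 = m$ and $\ph_i' = \rho^{n_i}_{V,V'}(\ph_i)$, the goal is
$$\gs{[Z(T_1,\ph_1)]_V}{[Z(T_2,\ph_2)]_V}\ =\ \gs{[Z(T_1,\ph_1')]_{V'}}{[Z(T_2,\ph_2')]_{V'}}.$$
By Theorem~A, each side is the $(T_1,T_2)$-block Fourier coefficient, along the embedding (\ref{diag-2}), of a Siegel-Weil Eisenstein series attached on the one hand to $\ph_1\otimes\ph_2 \in S(V(\A_f)^m)$ and on the other to $\ph_1'\otimes\ph_2' \in S(V'(\A_f)^m)$, together with the holomorphic section of parallel weight $\frac{m}{2}+1$ at each archimedean place. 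The finite part of the Fourier coefficient depends only on $\rho^m_{V,V'}(\ph_1\otimes\ph_2) = \ph_1'\otimes\ph_2'$ and on the chosen local isomorphisms $V_\pp \simeq V'_\pp$, while the archimedean part is determined by the parallel weight and the matrix $T$ alone, independent of whether the real signature is $(m+2,0)$ or $(m,2)$. This sameness of the two Eisenstein Fourier coefficients is precisely the content of \cite{K.Bints} in the Siegel-Weil setting.

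From this identity, every remaining claim is formal. For well-definedness on the reduced quotient, suppose $\sum_i c_i [Z(T_i,\ph_i)]_V$ lies in the radical on the $V$-side, so its pairing with every $[Z(T,\psi)]_V$ vanishes. By the displayed identity, the pairing of $\sum_i c_i [Z(T_i,\ph_i')]_{V'}$ with $[Z(T,\psi')]_{V'}$ also vanishes, and as $\psi$ ranges over $S(V(\A_f)^n)$ the image $\psi' = \rho^n_{V,V'}(\psi)$ sweeps out $S(V'(\A_f)^n)$, so the image on the $V'$-side lies in the radical. Hence the map descends and is an isometry. Ring multiplicativity is then immediate from the product formula (\ref{prod-ch}), which has an identical shape over $V$ and $V'$, together with the compatibility $\rho^{n_1+n_2}_{V,V'}(\ph_1\otimes\ph_2) = \rho^{n_1}_{V,V'}(\ph_1)\otimes\rho^{n_2}_{V,V'}(\ph_2)$. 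Surjectivity of the induced map is inherited from $\rho$, and injectivity follows from non-degeneracy of $\gs{}{}$ on $\SC^\bullet(V')$ combined with the isometry.

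The substantive obstacle is the archimedean identification of Eisenstein Fourier coefficients. Within Theorem~A the section at the real places is pinned down by the parallel weight $\frac{m}{2}+1$, and one must argue that the resulting local Whittaker integral is insensitive to the distribution of the signatures $(m+2,0)$ and $(m,2)$ across $\Sigma$---equivalently, to the precise value of $d_+$ within its parity class. This insensitivity is the archimedean side of the matching principle of \cite{K.Bints}; once granted, the compatibility $d_+(V) \equiv d_+(V') \pmod 2$ follows automatically from $\chi_V = \chi_{V'}$ and the Hasse product formula, and no further input is needed.
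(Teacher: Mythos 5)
Your proposal is correct and follows essentially the same route as the paper: Theorem~A reduces all inner products of special cycle classes to Fourier coefficients of pulled-back Siegel--Weil Eisenstein series, the tautological matching $\ph\leftrightarrow\rho^n_{V,V'}(\ph)$ (together with the archimedean matching of the Schwartz-form function with the Gaussian, both being weight-$\tfrac{m}{2}+1$ eigenvectors normalized at $0$, and the parity agreement $d_+(V)\equiv d_+(V')$ killing the sign $\e_{V,V'}$) identifies these coefficients, and non-degeneracy of the pairing on the reduced quotient then yields well-definedness, isometry, and bijectivity exactly as in the paper's Theorem~\ref{theo3.4}. The only small divergence is the multiplicativity step: you deduce it directly from the product formula (\ref{prod-ch}) and the compatibility of $\rho^\bullet_{V,V'}$ with tensor products, whereas the paper instead matches triple-product Fourier coefficients via the pullback along (\ref{diag-3}) (Proposition~\ref{prop-3.6}(ii)) and uses that a class in $SC^{n_1+n_2}(V')$ is determined by its pairings with complementary-degree classes; both arguments are valid.
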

A striking aspect of these isomorphisms is that, when $d_+(V)\ne d_+(V')$,  they relate classes in different degrees!

{\bf Example.} As a concrete example, consider the case where $F$ is the maximal real subfield of $\Q(\mu_{19})$, so that $d=9$, and take $m=3$. 
Let $V$ be a quadratic space over $F$ with signature $((3,2),(5,0)^{8})$ so that $d_+(V)=1$. 
Then, for a fixed neat compact open subgroup $K$ in 
$G(\A_f)$, the variety $Sh(V)_K$ is a smooth projective $3$-fold, perhaps with several connected components. It can be viewed as bearing the same relation to 
the classical Siegel $3$-fold over $\Q$ as a Shimura curve over $F$ bears to the classical modular curve. The special cycles are generalized Humbert surfaces, 
for $n=1$, Shimura curves, for $n=2$, and $0$-cycles, for $n=3$. Their cohomology classes, taken up to numerical equivalence,  
$$\SC^n(V)^K = \text{span}\{\,Z(T,\ph)\, \mid T\in \Sym_n(F)^K_{\ge0}, \ \ph\in S(V(\A_f)^n)\,\})/\sim,$$ 
yield a reduced intersection ring, $\SC^\bullet(V)^K$, and, passing to the limit over $K$, 
$$\SC^\bullet(V) = \SC^0(V) \oplus \SC^1(V)\oplus \SC^2(V)\oplus \SC^3(V), \qquad \SC^0(V) = \C \,\triv, \ \SC^3(V)= \C \,\triv^\vee.$$ 
Note that $\SC^\bullet(V)$is a {\it subquotient} of the cohomology ring of $Sh(V)$. 

For an {\it odd} integer $r$ with $1\le r <9$,  we consider quadratic spaces $V'= V'[r]$ over $F$ with a fixed isomorphism 
\beq\label{f-id}
V'(\A_f)\simeq V(\A_f)\eeq
and with 
$$\sig(V') = ((3,2)^{r},(5,0)^{d-r}).$$ 
Varying $r$ and the subset $\Sigma_+(V'[r])$ of indefinite places, we have $\sum_r{9 \choose r} =255$ such spaces, up  to isomorphism. 
Identifying $G(V)(\A_f)$ and $G(V')(\A_f)$ via (\ref{f-id}), we have a collection of Shimura varieties $Sh(V')_K$ 
of dimensions $3 r = 3, 9, 15,$ and $21$ with isomorphic reduced intersection rings, $\SC^\bullet(V')^K$. 

\subsection{Outline of contents}
This ends our extended introduction. In Section~\ref{section2}, we review the Siegel-Weil formula and show that it identifies the image  
of the generating series $\phi^B_n(\tau;\ph)$ of (\ref{B-gen-fun}) under the degree map with a special value of a Siegel Eisenstein series, 
Proposition~\ref{prop2.2}. By considering Fourier coefficients of pullbacks, we obtain formulas for inner products and triple products
of special cycles classes in term of Fourier coefficients of such pullbacks, Theorem~A and Corollary~\ref{IP-cor}. In Section~\ref{section3}, 
we explain how the matching principle introduced in \cite{K.Bints} yields comparisons like that of Theorem~B.   
In Section~\ref{section4}, we consider the case $d_+=0$, so that $V=\Vp$ is totally positive definite. For such a space 
we give a combinatorial description of a `cohomology' ring and `special cycles' classes in it.  Again by the Siegel-Weil formula, 
we show that Theorem~B can be extended to the case $d_+=0$. In particular, the special cycle ring $\SC^\bullet(V)$ for $d_+(V)$ even 
is isomorphic to the `special cycle' ring $\SC^\bullet(\Vp)$ for the associated totally positive definite space. 
Finally, in Section~\ref{section5}, we explain how information about local matching 
can be obtained from the results of \cite{KR.rdps} and \cite{sweet.meta} on degenerate principal series representations. 

\subsection{Thanks} I would like to thank Siddarth Sankaran for useful discussion about the construction of Section~\ref{section4}. 

\section{The Siegel-Weil formula and intersection numbers}\label{section2}
In this section, we explain how the Siegel-Weil formula provides information about the intersection products of special cycles. 
The case $d_+=1$ is treated in Section~10 of \cite{K.duke} to which we refer to more details.  We will use some of the notation of Section~5.3 of \cite{kudla.rem-gen}.

\subsection{Measures}\label{subsec2.1}
Let $A^{2md_+}(S_K)$ be the space of smooth top degree differential forms on $S_K$. 
By \cite{kudla.rem-gen} (5.18), the form $\Om^n$, defined there, is a $(nd_+,nd_+)$-form on $S_K$ which represents 
the $n$-th power of the top Chern class of the vector bundle 
$\boldCC_S$. In particular, the form $(-1)^{md_+} \Om^m$ gives an invariant volume form on $D$
and descends to $S_K$. 
For a fixed base point $z_0\in D^+$, let $K_\infty$  be the stabilizer of 
$z_0$ in $G(\R)$. Note that $K_\infty$ contains $Z(\R)$. 
Then there is an isomorphism
\beq\label{J-iso}
J:A^{2md_+}(S_K) \isoarrow  [C^\infty(G(\Q)Z(\R)\back G(\A))]^{K_\infty K},
\eeq
defined as follows. 
If $\eta$ is a $2md_+$-form on $S_K$, 
write 
$\eta = \breve\eta\cdot (-1)^{md_+}\Om^{md_+}$ for a function $\breve\eta$ on $S_K$
and define $J(\eta)$ as the pullback of $\breve\eta$ to $G(\Q)Z(\R)\back G(\A)$ via the natural surjective map
$$G(\Q)Z(\R)\back G(\A) \lra S_K = G(\Q)\back D\times G(\A_f)/K, \qquad [g_\infty, g_f] \mapsto [g_\infty(z_0),g_f].$$

We define a Haar measure\footnote{Here, to lighten notation, 
 we slightly abuse notation and write $\SO(V)$ rather than $R_F/\Q \SO(V)$.} $d_\infty g$ on $\SO(V)(\R) \simeq Z(\R)\back G(\R)$ as follows. 
For a smooth compactly supported form $\a \in A^{2md_+}_c(D)$ on $D$, write 
$\a = \breve \a \cdot (-1)^{md_+} \Om^{md_+}$ for a compactly supported function $\breve\a$ on $Z(\R)\back G(\R)$. 
Then $d_\infty g$ is defined by the condition that 
\beq\label{int-D-identity}
\int_D \a = \int_{Z(\R)\back G(\R)} \breve \a(g)\,d_\infty g,
\eeq
for all such $\a$. 

Let $d^Tg$ be Tamagawa measure on $\SO(V)(\A)$. The factorization $d^Tg = d_\infty g\cdot d_f g$, for $d_\infty g$ just defined,
determines a unique Haar measure on $\SO(V)(\A_f) = Z(\A_f)\back G(\A_f)$. 
Now 
\beq\label{haar-centerl}
Z(\Q)\back Z(\A_f) =Z(\Q) Z(\R)\back Z(\A) \simeq F^\times_\A/F^\times F^\times_\R
\eeq
is compact, and, taking the Haar measure $dz$ giving this space volume $1$, we obtain a Haar measure $d_f \tilde g$
on $Z(\Q)\back G(\A_f)$ with $d_f\tilde g = d_f g\cdot dz$. 
Finally, we have the measure $dg = d_\infty g_\infty \cdot d_f\tilde g_f$  on $Z(\R)Z(\Q)\back G(\A)$. 
With these choices, we have, for $\eta\in A^{2md_+}(S_K)$, 
\beq\label{geo-int}
\vol(K/(K\cap Z(\Q)), d_f\tilde g)\cdot \int_{S_K}\eta =  \int_{G(\Q)Z(\R)\back G(\A)} J(\eta)(g)\,dg.
\eeq

\subsection{Degree formulas}\label{section2}
We now follow Section~5.3 of \cite{kudla.rem-gen}.  Recall that for $G'=G'_n= R_{F/\Q} \Sp(n)$, the global metaplectic group 
$\wtg(\A)$ acts on $S(V(\A)^n)$ via the Weil representation $\o=\o_V=\o_{\psi, V}$ for a fixed nontrivial character $\psi$ of $F_\A/F$. 
Also recall the Schwartz form
\beq\label{arch-SF}
\phbold^{(n)}_\infty = \bigotimes_{\s\in \Sigma_+(V)}\ph_\s^{(n)} \ \tt \bigotimes_{\s\notin \Sigma_+(V)} \ph_{\s,+}^0 
\in [S(V_\R^n) \tt A^{(nd_+,nd_+)}(D)]^{G(\R)},
\eeq
where, for $\s\in \Sigma_+(V)$, 
$$\ph_\s^{(n)}\ \in S(V_\s^n)\tt A^{(n,n)}(D_\s)$$
 is the Schwartz form for $V_\s$, and, for $\s\notin \Sigma_+(V)$, $\ph_{\s,+}^0\in S(V_\s^n)$ is the Gaussian for $V_\s$, cf. Sections 7 and 8 of \cite{K.duke}.
 For $\ph \in S(V(\A_f)^n)^K$, the theta form  
\beq\label{theta-form}
\theta(g';\ph) = \sum_{x\in V(F)^n} \o(g')(\phbold^{(n)}_\infty\tt\ph)(x), \qquad g'\in \wtg(\A), 
\eeq
defines a closed $(nd_+,nd_+)$-form on $S_K$ and, as a function of $g'$,  is left invariant for the (canonical) image of $G'(\Q)$ in $\wtg(\A)$. 

We apply (\ref{J-iso}) to the top degree form $\theta(g';\ph) \wedge \Om_S^{n-m}$ on $S_K$ and obtain, for $g\in G(\A)$,  
\beq\label{to-Schwartz}
J(\, \theta(g';\ph) \wedge \Om_S^{n-m})(g) =(-1)^{md_+}\, \theta(g',g;\breve\phbold^{(n)}_\infty\tt\ph),
\eeq
where
$\breve\phbold^{(n)}_\infty$ is 
defined by 
\beq\label{arch-SF-fun}
\breve\phbold^{(n)}_\infty = \bigotimes_{\s\in \Sigma_+(V)}\breve\ph_\s^{(n)} \ \tt \bigotimes_{\s\notin \Sigma_+(V)} \ph_{\s,+}^0 
\in S(V_\R^n),
\eeq
with
\beq\label{phKMtoS}
\ph^{(n)}_\s(x)\wedge \Om^{m-n} = \breve\ph^{(n)}_\s(x)\,\Om^m.
\eeq
In particular, 
$\breve\phbold^{(n)}_\infty\tt\ph \in S(V(\A)^n)$, 
and the function in (\ref{to-Schwartz}) is the usual theta function
\beq\label{J-theta-form}
\theta(g',g;\breve\phbold^{(n)}_\infty\tt\ph) = \sum_{x\in V(F)^n} \o(g')(\breve\phbold^{(n)}_\infty\tt\ph)(g^{-1}x). 
\eeq
Also note that 
$$\phbold^{(n)}_\infty(0)\wedge \Om^{m-n} = \Om^m.$$
This accounts for the sign change, due to the fact that, in the definition of $J$,  $(-1)^{md_+}\Om^m$, the top power of the K\"ahler form is used.

Now, by (5.20) of \cite{kudla.rem-gen}, the cohomology class of the theta form is the generating series
\beq\label{KM-theo}
[\theta(g'_\tau;\ph)] = N(\det(v))^{\frac{m+2}4} \,\phi^B_n(\tau;\ph) = N(\det(v))^{\frac{m+2}4} \,
\sum_{T\in \Sym_n(F)_{\ge 0}} [Z(T,\ph)]\, \qq^T.
\eeq
Here, as in \cite{K.Bints}, section 1 in the case $n=1$,  $g'_\tau=[g_\tau,1]$ is an element of the metaplectic cover $\widetilde{G'}(\R)$ 
where $g_\tau\in G'(\R)$ has components
$$(g_\tau)_\s = \bpm 1&u_\s\\ {}&1\epm \bpm a_\s&{}\\{}&{}^ta_\s^{-1}\epm, \qquad \tau_\s = u_\s + i v_\s\, \in \H_n, \quad v_\s = a_\s\,{}^ta_\s.$$ 
Thus, recalling (\ref{def-deg-K}) and using (\ref{geo-int}),  we have 
\beq\label{deg-form-1}
\deg(\phi_n^B(\tau,\ph)\cdot \cbold_{S}^{m-n}) = (-1)^{md_+}N(\det(v))^{-\frac{m+2}4}\, \int_{G(\Q)Z(\R)\back G(\A)} \theta(g_\tau',g;\breve\phbold^{(n)}_\infty\tt\ph)\,dg.
\eeq

Similarly, we note that the Schwartz forms are compatible with wedge products so that, 
for $n=n_1+n_2$, 
$$\phbold^{(n)}_\infty = \phbold^{(n_1)}_\infty \wedge \phbold^{(n_2)}_\infty.$$
Thus, $\tau_1\in \H_{n_1}^d$, $\tau_2\in \H_{n_2}^d$,  and $\ph_i\in S(V(\A_f)^{n_i})$, $i=1$, $2$, 
\beq\label{theta-prod}
\theta(g'_{\tau_1};\ph_1) \wedge \theta(g'_{\tau_2};\ph_2) = \theta(g'_\tau; \ph_1\tt\ph_2),\qquad \tau = \bpm \tau_1&{}\\{}&\tau_2\epm.
\eeq
Again computing the degree, we have, for $n_1+n_2=m$, 
\begin{align}\label{deg-form-2}
&\gs{\,\phi^B_{n_1}(\tau_1,\ph_1)}{\phi^B_{n_2}(\tau_2,\ph_2)}\\
\nass
{}&=
(-1)^{md_+}N(\det(v_1)\det(v_2))^{-\frac{m+2}4}\, \int_{G(\Q)Z(\R)\back G(\A)} \theta(g_\tau',g;\breve\phbold^{(m)}_\infty\tt\ph_1\tt\ph_2)\,dg,\notag
\end{align}
for $\tau$ as in (\ref{theta-prod}). 

\subsection{Consequences of the Siegel-Weil formula}

We next apply the Siegel-Weil formula as in section 10 of \cite{K.duke}. 
Let  $I_n(s,\chi_V)$ be the degenerate principal series representation of $\wtg(\A)$ associated to $\chi_V$, induced from the Siegel 
parabolic $P$ of $G'$.  
Define the $\wtg(\A)$ intertwining map
$$\l_V: S(V(\A)^n)\lra I_n(s_0,\chi), \qquad \phbold\mapsto \l_V(\phbold)(g') = \o(g')\phbold(0),\qquad s_0 = \frac12\dim V - \rho_n,$$
$\rho_n= \frac12(n+1)$. The standard section $\P(s;f) \in I_n(s,\chi)$ attached to $\phbold$ is given  by 
$$\P(g',s;\phbold) = \o(g')\phbold(0)\cdot |a(g')|^{s-s_0}.$$
Here we follow the notation of \cite{KR.crelle}, \cite{sweet.thesis}, and \cite{gan.qiu.takeda}.
The Siegel-Eisenstein series is defined by the series
$$E(g',s,\l_V(\phbold)) = \sum_{\gamma\in P'(F)\back G'(F)} \P(\gamma g',s;\phbold),$$
in the half-plane of absolutely convergence $\Re(s)>\rho_n$ and has a meromorphic analytic continuation to 
the whole $s$ plane. 
Since $V$ is anisotropic, the Eisenstein series is holomorphic at $s=s_0$ and, by the Siegel-Weil formula, \cite{KR.crelle}, 
$$E(g',s_0,\l_V(\phbold)) = \int_{O(V)(F)\back O(V)(\A_F)} \theta(g',g;\phbold)\,dg$$
where 
$$\vol(O(V)(F)\back O(V)(\A_F),dg)=1.$$
In fact, we have
\beq\label{Tamagawa-2}
\int_{O(V)(F)\back O(V)(\A_F)} \theta(g',g;\phbold)\,dg = \frac12\int_{\SO(V)(F)\back SO(V)(\A_F)} \theta(g',g;\phbold)\,d^Tg.
\eeq
where $d^Tg$ is the Tamagawa measure on $\SO(V)(\A_F)$. 
This follows from the argument of Section~4 of \cite{K.Bints}, where we use the fact that the sign representation of $O(V)(F_\pp)$ 
does not occur in the local theta correspondence with $G'_\pp$ for $n<\dim V$, \cite{rallisHDC}, \cite{rallisHDC.bis}. 

On the other hand, since the action of $G(\A)$ on $S(V(\A)^n)$ factors through $\SO(V)(\A)$, the integral on the right side of 
(\ref{deg-form-1}), is 
\beq\label{basic-theta-int}
\int_{G(\Q)Z(\R)\back G(\A)} \theta(g',g;\breve\phbold_\infty\tt\ph)\,dg = 
\int_{\SO(V)(F)\back \SO(V)(\A)} \theta(g',g;\breve\phbold_\infty\tt\ph)\,d^Tg.
\eeq

Since the archimedean component $\l_{V_\infty}(\breve\phbold_\infty)$ will be fixed, for $\ph \in S(V(\A_f)^n)$, 
for 
$\kappa = \frac{m}2+1$ and  $s_0=\kappa-\rho_n$, 
we let
\beq\label{good-Eis}
E(\tau,s_0,\l_{V_f}(\ph)):= (-1)^{md_+} N(\det(v))^{-\kappa/2}\cdot
E(g'_\tau,s_0,\l_V(\breve\phbold_\infty\tt\ph)).
\eeq
\begin{rem}
Note that, for convenience,  we have included the sign $(-1)^{md_+}$ in the definition. This sign must be kept in 
mind later. 
\end{rem}
Then, from (\ref{deg-form-1}),  we obtain the following expression for the degree in terms of the Eisenstein series of genus $n$. 
\begin{prop}\label{prop2.2}
  For $\tau\in \H_n^d$, 
\beq\label{deg-form-2}
\deg(\phi_n^B(\tau,\ph)\cdot \cbold_{S}^{m-n}) = 2\, E(\tau,s_0,\l_{V_f}(\ph)),
\eeq
where $s_0= \kappa-\rho_n$.
\end{prop}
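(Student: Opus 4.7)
The plan is to assemble the statement by chaining together the identities already prepared in this subsection; essentially everything needed has been set up, and only bookkeeping of measures and normalizing constants remains. I would start from the formula (\ref{deg-form-1}), which rewrites $\deg(\phi_n^B(\tau,\ph)\cdot \cbold_S^{m-n})$ as the sign $(-1)^{md_+}$, times the archimedean normalizing factor $N(\det v)^{-(m+2)/4}$, times an integral of $\theta(g_\tau',g;\breve\phbold_\infty^{(n)}\tt\ph)$ over $G(\Q)Z(\R)\back G(\A)$. The goal is to transform this integral into a Siegel-Eisenstein series value at $s_0$, up to the advertised factor of $2$.

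The first step is to rewrite the geometric integral as an integral over $\SO(V)(F)\back \SO(V)(\A)$ with Tamagawa measure. This is exactly the content of (\ref{basic-theta-int}), which is justified by the fact that the $G(\A)$-action on the Schwartz space factors through $\SO(V)(\A)$, together with the explicit construction of the Haar measures in Section~\ref{subsec2.1}: Tamagawa measure is the product of $d_\infty g$ (which was set up in (\ref{int-D-identity}) to convert $(-1)^{md_+}\Om^m$-integrals over $D$ into group integrals) and $d_f g$, with the centre contributing the unit-volume identification (\ref{haar-centerl}). This matches the definition (\ref{J-iso}) of $J$ together with (\ref{geo-int}) and accounts for the sign $(-1)^{md_+}$ already visible in (\ref{deg-form-1}).

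The second step is to pass from the $\SO(V)$-integral to an $\OO(V)$-integral. This is where the factor of $2$ is produced: by (\ref{Tamagawa-2}), integrating the theta kernel over $\OO(V)(F)\back \OO(V)(\A)$ with Tamagawa measure of total mass one is half the corresponding $\SO(V)$-integral, the doubling reflecting that the sign representation of $\OO(V)(F_\pp)$ does not occur in the local theta correspondence for $n<\dim V$. Combined with the previous step, one obtains
\[
\int_{G(\Q)Z(\R)\back G(\A)} \theta(g_\tau',g;\breve\phbold^{(n)}_\infty\tt\ph)\,dg \;=\; 2\int_{\OO(V)(F)\back \OO(V)(\A)} \theta(g_\tau',g;\breve\phbold^{(n)}_\infty\tt\ph)\,dg.
\]
Now the Siegel-Weil formula in the form quoted for anisotropic $V$ identifies the right-hand side with $2\,E(g'_\tau,s_0,\l_V(\breve\phbold^{(n)}_\infty\tt\ph))$.

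It remains to reconcile the normalizing constants. Substituting the above into (\ref{deg-form-1}) gives
\[
\deg(\phi_n^B(\tau,\ph)\cdot \cbold_{S}^{m-n}) \;=\; 2\,(-1)^{md_+}\,N(\det v)^{-(m+2)/4}\,E(g'_\tau,s_0,\l_V(\breve\phbold^{(n)}_\infty\tt\ph)).
\]
Since $\kappa=\frac{m}{2}+1$, the exponent $-(m+2)/4$ equals $-\kappa/2$, so this is precisely $2\,E(\tau,s_0,\l_{V_f}(\ph))$ by the definition (\ref{good-Eis}). The only subtle point in the whole argument is the provenance of the factor of $2$, and that is fully handled by (\ref{Tamagawa-2}); no new analytic input is required.
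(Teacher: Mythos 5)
Your proposal is correct and follows exactly the paper's route: the proposition is obtained, as you do, by chaining (\ref{deg-form-1}), (\ref{basic-theta-int}), (\ref{Tamagawa-2}), the Siegel--Weil formula for anisotropic $V$, and the normalization (\ref{good-Eis}), with the factor of $2$ coming precisely from the passage between the $\SO(V)$ Tamagawa integral and the volume-one $\OO(V)$ integral. Nothing is missing.
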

In particular, as a consequence of this identity, the special value $E(\tau,s_0,\l_{V_f}(\ph))$ is a {\it holomorphic} Hilbert-Siegel modular form of parallel weight $\kappa$ 
with Fourier 
expansion
\beq\label{Eis-FC}
E(\tau,s_0,\l_{V_f}(\ph)) =\sum_{T\in \Sym_n(F)_{\ge0}} A(T,\l_{V_f}(\ph))\, \qq^T.
\eeq
\begin{cor}\label{cor2.2}  The inner product of a class $[Z(T,\ph)]\in H^{2nd_+}(S)$ with the class $\cbold_{S}^{m-n}$ is given by 
$$\gs{Z(T,\ph)}{\cbold_{S}^{m-n}} = \deg(Z(T,\ph)\cdot \cbold_S^{m-n}) =2\, A(T, \l_{V_f}(\ph)).$$
\end{cor}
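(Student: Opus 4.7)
The plan is to obtain Corollary~\ref{cor2.2} as a direct comparison of Fourier coefficients in the identity of Proposition~\ref{prop2.2}.

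First, I would expand the left-hand side of (\ref{deg-form-2}). Since the degree map is $\C$-linear and extends by zero off of $H^{2md_+}(S)$, applying it to the generating series (\ref{B-gen-fun}) multiplied by the class $\cbold_S^{m-n}$ gives the formal $\qq$-expansion
\begin{equation*}
\deg\bigl(\phi^B_n(\tau,\ph)\cdot \cbold_S^{m-n}\bigr) = \sum_{T\in \Sym_n(F)_{\ge 0}} \deg\bigl([Z(T,\ph)]\cdot \cbold_S^{m-n}\bigr)\cdot \qq^T,
\end{equation*}
where each coefficient now lies in $\C$ because $[Z(T,\ph)]\cdot \cbold_S^{m-n}\in H^{2md_+}(S)$ is of top degree.

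Next, I would invoke Proposition~\ref{prop2.2}, which identifies this series with $2\,E(\tau,s_0,\l_{V_f}(\ph))$, together with the Fourier expansion (\ref{Eis-FC}). Since two Hilbert-Siegel $\qq$-expansions agree if and only if their Fourier coefficients agree for every $T\in \Sym_n(F)_{\ge 0}$, comparing coefficients of $\qq^T$ yields
\begin{equation*}
\deg\bigl([Z(T,\ph)]\cdot \cbold_S^{m-n}\bigr) = 2\,A(T,\l_{V_f}(\ph)).
\end{equation*}
Finally, by the very definition of the inner product, $\gs{Z(T,\ph)}{\cbold_S^{m-n}} = \deg([Z(T,\ph)]\cdot \cbold_S^{m-n})$, giving the claimed identity.

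There is essentially no obstacle: all the substantive work has been packaged into Proposition~\ref{prop2.2} via the Siegel-Weil formula and the cohomological identity (\ref{KM-theo}). The only mild subtlety is to make sure the formal $\qq$-expansion on the cohomology side makes sense after applying the (scalar-valued) degree map; this is immediate since only the top-degree summands contribute, and the coefficient of $\qq^T$ is the single number $\deg([Z(T,\ph)]\cdot \cbold_S^{m-n})$. No convergence issue arises because we are matching formal $\qq$-expansions.
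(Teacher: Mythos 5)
Your argument is exactly the paper's (implicit) one: Corollary~\ref{cor2.2} is obtained by applying the degree map termwise to the generating series, invoking Proposition~\ref{prop2.2}, and comparing Fourier coefficients with the expansion (\ref{Eis-FC}). The proposal is correct and requires no further comment.
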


We also obtain relations between intersection products and Fourier coefficients of pullbacks. 
For $\ph_i\in S(V(\A_f)^{n_i})$, $i=1$, $2$, where $n_1+n_2=m$, we write
\beq\label{Eis-FC-2}
E(\bpm \tau_1&{}\\ {}&\tau_2\epm,\frac12,\l_{V_f}(\ph_1\tt\ph_2)) = \sum_{T_1\in \Sym_{n_1}(F)_{\ge0}} \sum_{T_2\in \Sym_{n_2}(F)_{\ge0}}
A(T_1,T_2;\l_{V_f}(\ph_1\tt\ph_2))\,\qq_1^{T_1}\,\qq_2^{T_2}
\eeq
for the Fourier expansion of the pullback under (\ref{diag-2}). 
Similarly, for  $\ph_i\in S(V(\A_f)^{n_i})$, $i=1$, $2$, $3$, where $n_1+n_2+n_3=m$, 
we write 
\begin{multline}\label{Eis-FC-3}
E(\bpm \tau_1&{}&{}\\ {}&\tau_2&{}\\{}&{}&\tau_3\epm,\frac12,\l_{V_f}(\ph_1\tt\ph_2\tt\ph_3))\\
\nass
 = \sum_{T_1\in \Sym_{n_1}(F)_{\ge0}} \sum_{T_2\in \Sym_{n_2}(F)_{\ge0}} \sum_{T_3\in \Sym_{n_3}(F)_{\ge0}}
A(T_1,T_2, T_3;\l_{V_f}(\ph_1\tt\ph_2\tt\ph_3))\,\qq_1^{T_1}\,\qq_2^{T_2}\,\qq_3^{T_3}
\end{multline}
for the Fourier expansion of the pullback under (\ref{diag-3}).
Note that the special value is now take at the point $s_0=\kappa - \rho_m = \frac12$. Also note that these pullbacks have a non-trivial cuspidal 
component.  For example, in case (\ref{Eis-FC}), the cuspidal projection involves the doubling integrals \cite{PSR-doubling}, \cite{KR.annals}, \cite{gan.qiu.takeda}, and hence is 
controlled by special values of L-functions.  

\begin{atheo}\label{theo-B} (i)  For $\ph_i\in S(V(\A_f)^{n_i})$, $i=1$, $2$, where $n_1+n_2=m$, 
\begin{align*}
\deg(\,\phi_{n_1}^B(\tau_1,\ph_1) \cdot \phi_{n_2}^B(\tau_2,\ph_2)\,) &= 2\,  E(\bpm \tau_1&{}\\ {}&\tau_2\epm,\frac12,\l_{V_f}(\ph_1\tt\ph_2)).
\end{align*}

In particular, for $T_1\in \Sym_{n_1}(F)_{\ge 0}$ and $T_2\in \Sym_{n_2}(F)_{\ge 0}$, the intersection product of the 
weighted special cycles is given by
$$\deg(\,[Z(T_1,\ph_1)]\cdot [Z(T_2,\ph_2)]\,) =2\, A(T_1,T_2;\l_{V_f}(\ph_1\tt\ph_2)),$$
for the Fourier coefficient of the pulback in (\ref{Eis-FC-2}). \hfb 
(ii) Similarly, for  $\ph_i\in S(V(\A_f)^{n_i})$, $i=1$, $2$, $3$, where $n_1+n_2+n_3=m$, 
\begin{align*}
\deg(\,\phi_{n_1}^B(\tau_1,\ph_1) \cdot \phi_{n_2}^B(\tau_2,\ph_2)\cdot \phi_{n_3}^B(\tau_3,\ph_3)\,) &= 2\,  
E(\bpm \tau_1&{}&{}\\ {}&\tau_2&{}\\ {}&{}&\tau_3\epm,\frac12,\l_{V_f}(\ph_1\tt\ph_2\tt\ph_3)),
\end{align*}
and hence
$$\deg(\,[Z(T_1,\ph_1)]\cdot [Z(T_2,\ph_2)]\cdot [Z(T_3,\ph_3)]\, ) = 2\,A(T_1,T_2, T_3;\l_{V_f}(\ph_1\tt\ph_2\tt \ph_3)),$$
for the Fourier coefficient of the pullback in (\ref{Eis-FC-3}).
\end{atheo}

Passing to classes in $SC^\bullet(V)$, we obtain the following formulas. 
\begin{cor}\label{IP-cor} (i) For $\ph_i\in S(V(\A_f)^{n_i})$, $i=1$, $2$, where $n_1+n_2=m$,
$$ 
 \gs{z(T_1,\ph_1)}{z(T_2,\ph_2)} = 2\,A(T_1,T_2;\l_{V_f}(\ph_1\tt\ph_2)).
$$
 (ii) For  $\ph_i\in S(V(\A_f)^{n_i})$, $i=1$, $2$, $3$, where $n_1+n_2+n_3=m$,
$$
 \gs{z(T_1,\ph_1)\cdot z(T_2,\ph_2)}{z(T_3,\ph_3)} = 2\,A(T_1,T_2, T_3;\l_{V_f}(\ph_1\tt\ph_2\tt \ph_3)).
 $$ 
\end{cor}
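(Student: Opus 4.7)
The plan is to deduce Corollary~\ref{IP-cor} as an immediate consequence of Theorem~A, using only the definition of the reduced ring $\SC^\bullet(V) = \SC^\bullet(V)^\nat/\text{Rad}$ together with the fact that both the pairing $\gs{\cdot}{\cdot}$ and the cup product descend to this quotient.

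First, I would verify the descent. By definition the radical kills the pairing, so $\gs{\cdot}{\cdot}$ passes to $\SC^\bullet(V)$; and the paragraph preceding (\ref{red-ring}) already notes, by associativity of cup product, that $\text{Rad}$ is a two-sided ideal in $\SC^\bullet(V)^\nat$, so the product passes as well. Consequently every class $z(T,\ph)\in\SC^\bullet(V)$ can be lifted to $[Z(T,\ph)]\in H^{2nd_+}(S)$ for the purpose of computing pairings and products, and the pairings or products of such lifts compute the pairings or products of the corresponding reduced classes.

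For part (i), since $n_1+n_2=m$, the cup product $[Z(T_1,\ph_1)]\cdot[Z(T_2,\ph_2)]$ lies in the top cohomology $H^{2md_+}(S)$, where the normalized degree map is defined. Thus
$$\gs{z(T_1,\ph_1)}{z(T_2,\ph_2)}=\deg\bigl([Z(T_1,\ph_1)]\cdot[Z(T_2,\ph_2)]\bigr),$$
and Theorem~A(i) identifies the right-hand side with $2\,A(T_1,T_2;\l_{V_f}(\ph_1\tt\ph_2))$. Part (ii) is exactly analogous: because $n_1+n_2+n_3=m$, the triple cup product again lies in top cohomology, and after lifting
$$\gs{z(T_1,\ph_1)\cdot z(T_2,\ph_2)}{z(T_3,\ph_3)}=\deg\bigl([Z(T_1,\ph_1)]\cdot[Z(T_2,\ph_2)]\cdot[Z(T_3,\ph_3)]\bigr),$$
which Theorem~A(ii) evaluates as $2\,A(T_1,T_2,T_3;\l_{V_f}(\ph_1\tt\ph_2\tt\ph_3))$.

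I expect no genuine obstacle here: the corollary is essentially a reformulation of Theorem~A in the quotient ring $\SC^\bullet(V)$. The only point that would merit even a line of verification is the compatibility of the lifting with both the pairing and the cup product, which is immediate from the descent described above.
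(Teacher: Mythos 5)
Your proposal is correct and follows exactly the route the paper intends: the corollary is obtained from Theorem~A simply by passing to the quotient $\SC^\bullet(V)=\SC^\bullet(V)^\nat/\text{Rad}$, using that the radical is an ideal (so the product descends) and by definition kills the pairing (so the pairing descends). The paper gives no further argument beyond the phrase ``passing to classes in $SC^\bullet(V)$,'' and your verification of the descent supplies precisely the routine details that phrase suppresses.
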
 

Part (ii) of Corollary~\ref{IP-cor}  
shows that the products in the ring $SC^\bullet(V)$ are completely controlled by Fourier coefficients of pullbacks under 
(\ref{diag-3}) of 
certain holomorphic Hilbert-Siegel Eisenstein series of genus $m$ and of 
parallel weight $(\kappa,\dots, \kappa)$, $\kappa = \frac{m}2+1 = \rho_m+\frac12$. 
More precisely, note that a class $z\in SC^n(V)$ is determined by its inner products $\gs{z}{z'}$ for $z'\in SC^{m-n}(V)$. 
In particular, the class $z(T_1,\ph_1)\cdot z(T_2,\ph_2) \in SC^{n_1+n_2}(V)$ is determined by the inner products 
with classes of the form $z(T_3,\ph_3)$ and these are 
given by (ii) of the corollary. 
Similarly, the inner product on $SC^\bullet(V)$ is determined by (i) of the corollary. 

\begin{rem}
It will be interesting to investigate the non-vanishing of such Fourier coefficients of pullbacks to see what kind of information 
about the reduced ring of special cycles can be obtained from our product formulas. 
\end{rem}

\section{Comparison results}\label{section3}

Since the structure of the reduced ring of special cycles is controlled by the Siegel-Weil Eisenstein series,  
the matching principle introduced in \cite{K.Bints}, section 4, yields relations 
between such special cycle rings. We will use the notation of section 4 of \cite{K.Bints} generalized to $F$. A detailed 
treatment can be found in  \cite{KR.crelle}. 

\subsection{The matching principle} 
Suppose that $V_1$ and $V_2$ are quadratic spaces over $F$ with $\dim_FV_1=\dim_FV_2$ and $\chi=\chi_{V_1}=\chi_{V_2}$. 
There are intertwining maps
$$\l_{V_i}:  S(V_i(\A)^n) \lra I(s_0,\chi), \qquad \l_{V_i}(\ph_i)(g') = \o_{V_i}(g')\ph_i(0),$$
where $s_0= \kappa - \rho_n$. 
\begin{defn} (\cite{K.Bints}, Definition~4.3.)    Schwartz functions $\ph_1\in S(V_1(\A)^n)$ and $\ph_2\in S(V_2(\A)^n)$ 
are said to {\bf match} if 
$$\l_{V_1}(\ph_1) = \l_{V_2}(\ph_2) \in I(s_0,\chi).$$
\end{defn}
There are obvious local analogues. 

For example, in the archimedean case, for $V=V_i$, $i=1$, $2$, suppose that the signature of $V$ is given by (\ref{def-sigV}). 
For $\s\in \Sigma_+(V)$, let  $D^{(\s)} = D(V_\s)$ be the space of 
oriented negative $2$-planes in $V_\s= V\tt_{F,\s}\R$ as in (\ref{DVS}).  
For $\s\in \Sigma_+(V)$, let $\breve\ph^{(n)}_\s$ be the Schwartz function on $V_\s^n$ defined by (\ref{phKMtoS}).
Let $V_{\s,+}$ be the quadratic space over $\R$ of signature $(m+2,0)$, and let 
$$\ph^{(n)}_{\s,+} \in S(V_{\s,+}^n)$$
be the Gaussian. Then, by the analogue of Corollary~4.15 of \cite{K.Bints}, we have the following matching.
\begin{lem}
$$\l_{V_\s}(\breve\ph^{(n)}_\s) = \l_{V_{\s,+}}(\ph^{(n)}_{\s,+})\ \in I_{n,\s}(s_0,\chi).$$
\end{lem}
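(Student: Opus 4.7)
The plan is to reduce the claim to the local archimedean matching computation of \cite{K.Bints}, Corollary 4.15, which handles the case $F=\Q$; the extension to an arbitrary totally real $F$ is purely formal, since at the single archimedean place $\s$ we have real quadratic spaces $V_\s$ of signature $(m,2)$ and $V_{\s,+}$ of signature $(m+2,0)$, and the statement is a local identity in the induced representation $I_{n,\s}(s_0,\chi)$. First I would note that $\chi_{V_\s}=\chi_{V_{\s,+}}$: both real quadratic spaces have dimension $m+2$ and determinant $1$, hence the local characters agree and the two standard sections live in the same degenerate principal series $I_{n,\s}(s_0,\chi)$ with $s_0=\kappa-\rho_n$ and $\kappa=\frac{m}{2}+1$.

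Next I would identify the $\widetilde{K}'_\infty$-type of each section. The Gaussian $\ph^{(n)}_{\s,+}$ on a positive-definite space of dimension $m+2$ transforms by the one-dimensional character of weight $\kappa=\frac{m+2}{2}$ under the maximal compact $\widetilde{K}'_\infty\subset\widetilde{\Sp(n,\R)}$, by the classical formulas for the Weil representation. For $\breve\ph^{(n)}_\s$, one uses the defining property of the Kudla--Millson Schwartz form: $\ph^{(n)}_\s$ is a Schwartz form on $V_\s^n\times D^{(\s)}$ transforming by the weight-$\kappa$ character under $\widetilde{K}'_\infty$ (it produces a holomorphic generating series of weight $\kappa$), and since $\Om$ is $\widetilde{G'}(\R)$-invariant, the relation $\ph^{(n)}_\s(x)\wedge \Om^{m-n}=\breve\ph^{(n)}_\s(x)\,\Om^m$ forces $\breve\ph^{(n)}_\s$ to have the same weight-$\kappa$ transformation behavior.

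I would then appeal to the structure of $I_{n,\s}(s_0,\chi)$ at $s_0=\kappa-\rho_n$: the weight-$\kappa$ subspace is one-dimensional (this is exactly the kind of input recorded at the end of Section~5 and supplied by \cite{KR.rdps} and \cite{sweet.meta}). Consequently $\l_{V_\s}(\breve\ph^{(n)}_\s)$ and $\l_{V_{\s,+}}(\ph^{(n)}_{\s,+})$ must be proportional sections. The proportionality constant is computed by evaluating at $g'=e$, where $\l_V(\ph)(e)=\ph(0)$. From (\ref{phKMtoS}) applied at the base point $z_0\in D^{(\s)}$ the KM form specializes to the Gaussian factor on the negative definite $2$-plane $z_0$ times the positive-definite Gaussian on its orthogonal complement, so $\breve\ph^{(n)}_\s(0)=\ph^{(n)}_{\s,+}(0)$ after accounting for $\Om^m$ at $z_0$.

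The main obstacle is the final normalization: one must check that the sign $(-1)^{md_+}$ and the powers of $\Om$ appearing in the definition of $\breve\ph^{(n)}_\s$ combine with the values of the Schwartz form at the origin to give the constant $1$ and not some other scalar. This bookkeeping is precisely what is carried out in Corollary~4.15 of \cite{K.Bints}; once that archimedean computation is imported at the place $\s$, the stated equality in $I_{n,\s}(s_0,\chi)$ follows.
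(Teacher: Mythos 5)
Your argument is essentially the paper's own proof: both sides are weight-$\kappa=\frac{m}{2}+1$ eigenvectors for $K'_\infty$ in $I_{n,\s}(s_0,\chi)$, that $K'_\infty$-type occurs with multiplicity one, and the values at the identity agree since $\breve\ph^{(n)}_\s(0)=\ph^{(n)}_{\s,+}(0)=1$ by (\ref{phKMtoS}) and $\phbold^{(n)}_\infty(0)\wedge\Om^{m-n}=\Om^m$. The only superfluous worry is the sign $(-1)^{md_+}$, which enters the definition of $J$ and the normalization (\ref{good-Eis}) but not the definition of $\breve\ph^{(n)}_\s$, so no extra bookkeeping is needed here.
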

This is immediate from the fact that these two functions are eigenvectors for $K'_\infty$ of weight $\frac{m}2+1$ and 
$\breve\ph^{(n)}_\s(0) = \ph^{(n)}_{\s,+}(0) =1$. 
If we write 
$$\phbold_{\infty, i} \in S((V_i\tt_\Q \R)^n)\tt A^{(nd_+^i,n d_+^i)}(D(V_i\tt_\Q\R)), \qquad i=1, 2, $$
for the Schwartz forms defined by (\ref{arch-SF}) and $\breve\phbold_{\infty,i}\in S(V_{i,\R}^n)$ for the corresponding Schwartz functions, then these match as well. 

\begin{rem}  Note that matching is {\bf not}  compatible with tensor products in general.
If $\ph_{\pp,i}^{(n_1)}\in S(V_{\pp,i}^{n_1})$ are matching and $\ph_{\pp,i}^{(n_2)}\in S(V_{\pp,i}^{n_2})$ are matching, 
it need not be the case that the tensor products 
$\ph_{\pp,1}^{(n_1)}\tt \ph_{\pp,1}^{(n_2)} \in S(V_{\pp,1}^n)$ and $\ph_{\pp,2}^{(n_1)}\tt \ph_{\pp,2}^{(n_2)}\in S(V_{\pp,2}^n)$ 
are matching. A rather explicit description of examples is given in Section~\ref{section5}, in particular Remark~\ref{rem5.3} (iii). 
\end{rem}

{\bf Basic Observation:}  (i) If $\ph_1\in S(V_1(\A)^n)$ and $\ph_2\in S(V_2(\A)^n)$ are matching Schwartz functions, 
then the associated Siegel-Eisenstein series coincide, 
\beq\label{basic-matching-id-general}
E(g',s, \l_{V_1}( \ph_1)) = E(g',s, \l_{V_2}(\ph_2)).
\eeq
(ii) If $V_1$ and $V_2$ 
have signatures given by (\ref{def-sigV}) for $d_+(V_1)$ and $d_+(V_2)$, 
and if $\ph_1\in S(V_1(\A_f)^n)$ and $\ph_2\in S(V_2(\A_f)^n)$ are matching Schwartz functions on the finite ad\`eles, then the associated Siegel-Eisenstein series coincide, 
\beq\label{basic-matching-id}
E(g',s, \l_{V_1}(\breve\phbold_{\infty,1}\tt \ph_1)) = E(g',s, \l_{V_2}(\breve\phbold_{\infty,2}\tt\ph_2)).
\eeq

\subsection{Consequences of matching} \label{section3.2}
Since the inner product and the ring structure on $SC^\bullet(V)$ are controlled by the Fourier coefficients of 
pullbacks of special values of such Siegel-Eisenstein series, the identity (\ref{basic-matching-id}) implies various relations. 

For the remainder of this section, we slightly shift notation and suppose that $V$ and $V'$ are quadratic spaces over $F$ with $\dim_FV=\dim_FV'$,  $\chi=\chi_{V}=\chi_{V'}$, and with signatures given by (\ref{def-sig-V}) with $1\le d_+(V), d_+(V') <d$.
For data $T\in \Sym_n(F)$ and $\ph\in S(V(\A_f)^n)$, we write $Z_V(T,\ph)$ (resp.
$z_V(T,\ph)$)  for the corresponding class in $H^{2nd_+(V)}(Sh(V))$ (resp. $\SC^n(V)$).  

First, we have the following consequence of Corollary~\ref{cor2.2}.
\begin{prop}\label{prop3.5}
If $\ph\in S(V(\A_f)^n)$ and $\ph'\in S(V'(\A_f)^n)$ are matching functions,  
then,  for all $T\in \Sym_n(F)_{\ge0}$, 
$$(-1)^{m d_+(V)}\deg(Z_V(T,\ph)\cdot \cbold_S^{m-n}) = (-1)^{m d_+(V')}\deg(Z_{V'}(T,\ph')\cdot \cbold_{S'}^{m-n}).$$
\end{prop}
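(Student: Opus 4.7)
The plan is to deduce the proposition directly from Corollary~\ref{cor2.2} together with the archimedean matching lemma just proved, carefully tracking the sign $(-1)^{md_+}$ built into the normalization (\ref{good-Eis}) of the Eisenstein series.

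First I would apply Corollary~\ref{cor2.2} on each side of the desired identity. This reduces the proposition to establishing
$$(-1)^{md_+(V)}\,A(T,\l_{V_f}(\ph)) \ = \ (-1)^{md_+(V')}\,A(T,\l_{V'_f}(\ph')),$$
where $A(T,\cdot)$ denotes the $T$-th Fourier coefficient of the normalized Eisenstein series $E(\tau,s_0,\l_{V_f}(\ph))$, respectively $E(\tau,s_0,\l_{V'_f}(\ph'))$, defined via (\ref{good-Eis}).

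Next I would verify that the full adelic Schwartz data on $V$ and $V'$ match. At the finite places this is the hypothesis that $\ph$ and $\ph'$ match. At each archimedean place $\s\in\Sigma_+(V)$, the matching lemma above gives $\l_{V_\s}(\breve\ph^{(n)}_\s) = \l_{V_{\s,+}}(\ph^{(n)}_{\s,+})$, and similarly for $V'$; at each remaining archimedean place, both Schwartz data are already Gaussians on a real space of signature $(m+2,0)$. Since $\chi_V=\chi_{V'}$ and any two positive definite real quadratic spaces of dimension $m+2$ are isometric, the archimedean components $\l_{V_\infty}(\breve\phbold^{(n)}_{\infty,V})$ and $\l_{V'_\infty}(\breve\phbold^{(n)}_{\infty,V'})$ coincide in the archimedean degenerate principal series. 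Combining the finite and archimedean matchings,
$$\l_V(\breve\phbold^{(n)}_{\infty,V}\tt\ph) \ = \ \l_{V'}(\breve\phbold^{(n)}_{\infty,V'}\tt\ph') \ \in \ I_n(s_0,\chi),$$
so that by the basic matching observation (\ref{basic-matching-id-general}) the underlying Siegel Eisenstein series agree as functions of $g'$.

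Finally, dividing this equality by the common factor $N(\det v)^{\kappa/2}$ and reading off the definition (\ref{good-Eis}) on each side yields
$$(-1)^{md_+(V)}\,E(\tau,s_0,\l_{V_f}(\ph)) \ = \ (-1)^{md_+(V')}\,E(\tau,s_0,\l_{V'_f}(\ph')),$$
and comparing coefficients of $\qq^T$ delivers the identity from the first step. I do not expect a substantive obstacle here; the only point to keep track of is precisely the sign $(-1)^{md_+}$ appearing in (\ref{good-Eis}), which is what produces the asymmetric signs on the two sides of the proposition when $d_+(V)$ and $d_+(V')$ have different parities modulo $2/m$---in particular, when $m$ is odd and the two $d_+$'s differ in parity the signs are genuinely nontrivial.
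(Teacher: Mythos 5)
Your proposal is correct and is essentially the argument the paper intends: Corollary~\ref{cor2.2} converts each degree into a Fourier coefficient of the normalized Eisenstein series, the archimedean matching lemma together with the hypothesis at the finite places gives equality of the unnormalized Eisenstein series via the Basic Observation, and the signs $(-1)^{md_+}$ from the normalization (\ref{good-Eis}) account for the asymmetric factors in the statement. (Only your closing phrase ``parity modulo $2/m$'' is garbled --- the signs are nontrivial exactly when $m$ is odd and $d_+(V)\not\equiv d_+(V')\pmod 2$ --- but this does not affect the proof.)
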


\begin{rem} (i)  This says that the volumes of the cycles $Z_V(T,\ph)$ and $Z_{V'}(T,\ph')$ 
with respect to a suitable power of the respective K\"ahler forms coincide.
\hfb 
(ii) For example,  note that by matching, $\ph(0) = \l_V(\ph)(1)= \l_{V'}(\ph')(1) = \ph'(0)$, whereas
$Z_V(0,\ph) = \ph(0)\,\cbold_S^n$ and $Z_{V'}(0,\ph') = \ph'(0)\,\cbold_{S'}^n$. Thus, if we can find matching $\ph$ and $\ph'$ with $\ph(0)\ne 0$,  
Proposition~\ref{prop3.5} gives
\beq\label{equal-Chern-no}
(-1)^{m d_+(V)}\deg(\cbold_S^m) = (-1)^{m d_+(V')}\deg(\cbold_{S'}^m),
\eeq
an identity between top Chern numbers. 
As explained below, for $n=1$ such matching functions always exist, so that (\ref{equal-Chern-no}) holds for any pair $V$ and $V'$. 
\end{rem} 

Next, as a consequence of Corollary~\ref{IP-cor}, we have the following relations 
between inner products in $SC^\bullet(V)$ and $SC^\bullet(V')$.  
\begin{prop}\label{prop-3.6} (i) Suppose that, for $m=n_1+n_2$,  and for pairs of functions $\ph_i\in S(V(\A_f)^{n_i})$ and 
$\ph'_i\in S(V'(\A_f)^{n_i})$, the functions $\ph_1\tt\ph_2 \in S(V(\A_f)^m)$ and $\ph'_1\tt\ph'_2 \in S(V'(\A_f)^m)$
match. Then 
$$\gs{z_V(T_1,\ph_1)}{z_V(T_2,\ph_2)}  =  \e_{V,V'}\,\gs{z_{V'}(T_1,\ph'_1)}{z_{V'}(T_2,\ph'_2)},$$
where $\e_{V,V'}=(-1)^{m (d_+(V')-d_+(V))}$. \hfb
(ii)  Suppose that, for $m=n_1+n_2+n_3$,  and for triples of functions $\ph_i\in S(V(\A_f)^{n_i})$ and 
$\ph'_i\in S(V'(\A_f)^{n_i})$, the functions $\ph_1\tt\ph_2 \tt\ph_2\in S(V(\A_f)^m)$ and $\ph'_1\tt\ph'_2\tt \ph'_3 \in S(V'(\A_f)^m)$
match. Then 
$$\gs{z_V(T_1,\ph_1)\cdot z_V(T_2,\ph_2)}{z_V(T_3,\ph_3)} = \e_{V,V'}\,\gs{z_{V'}(T_1,\ph'_1)\cdot z_{V'}(T_2,\ph'_2)}{z_{V'}(T_3,\ph'_3)} .$$
\end{prop}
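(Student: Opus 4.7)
The plan is to reduce both parts directly to the Fourier-coefficient formulas of Corollary~\ref{IP-cor} together with the basic matching identity (\ref{basic-matching-id}), and then carefully track the sign $(-1)^{md_+}$ that was absorbed into the normalization (\ref{good-Eis}).

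First, for part (i), I would start by applying Corollary~\ref{IP-cor}(i) to each side of the desired identity, converting the inner product $\gs{z_V(T_1,\ph_1)}{z_V(T_2,\ph_2)}$ into $2\,A(T_1,T_2;\l_{V_f}(\ph_1\tt\ph_2))$, and similarly $\gs{z_{V'}(T_1,\ph'_1)}{z_{V'}(T_2,\ph'_2)}$ into $2\,A(T_1,T_2;\l_{V'_f}(\ph'_1\tt\ph'_2))$, where these are Fourier coefficients of the pullback of the Eisenstein series in the sense of (\ref{Eis-FC-2}). These coefficients live in the series $E(\tau, s_0, \l_{V_f}(\,\cdot\,))$ as defined by (\ref{good-Eis}), so each already carries the sign $(-1)^{md_+(V)}$ or $(-1)^{md_+(V')}$ relative to the unnormalized Siegel-Eisenstein series.

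Next, I would invoke the archimedean matching lemma (the one stated just after Definition~3.1) together with the hypothesis that $\ph_1\tt\ph_2$ and $\ph'_1\tt\ph'_2$ match on $\A_f$, to conclude that the full ad\`elic Schwartz data $\breve\phbold_{\infty,V}\tt\ph_1\tt\ph_2$ and $\breve\phbold_{\infty,V'}\tt\ph'_1\tt\ph'_2$ match. By the Basic Observation (\ref{basic-matching-id}), the corresponding unnormalized Siegel-Eisenstein series are identically equal. Pulling back under the diagonal embedding (\ref{diag-2}) and comparing $\qq_1^{T_1}\qq_2^{T_2}$-Fourier coefficients then yields
\begin{equation*}
(-1)^{md_+(V)}\,A(T_1,T_2;\l_{V_f}(\ph_1\tt\ph_2))
= (-1)^{md_+(V')}\,A(T_1,T_2;\l_{V'_f}(\ph'_1\tt\ph'_2)),
\end{equation*}
after stripping off the common factor $N(\det v_1\,\det v_2)^{-\kappa/2}$. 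This immediately gives the claimed identity with $\e_{V,V'} = (-1)^{m(d_+(V')-d_+(V))}$.

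Part (ii) is entirely parallel: apply Corollary~\ref{IP-cor}(ii) to rewrite both triple inner products as Fourier coefficients of pullbacks under the embedding (\ref{diag-3}) of Siegel-Eisenstein series of genus $m$. The matching hypothesis on the triple tensor products combined with the archimedean matching lemma again produces, via (\ref{basic-matching-id}), an equality of the underlying unnormalized Eisenstein series, and comparing $\qq_1^{T_1}\qq_2^{T_2}\qq_3^{T_3}$-coefficients gives the analogous identity with the same sign $\e_{V,V'}$.

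I do not expect a genuine obstacle here; the step that requires the most care is the sign bookkeeping, since the factor $(-1)^{md_+}$ enters through the definition (\ref{good-Eis}) rather than through the Eisenstein series itself, and it is precisely the discrepancy between $(-1)^{md_+(V)}$ and $(-1)^{md_+(V')}$ that produces $\e_{V,V'}$. Once this is pinned down, both parts follow by the same template: Corollary~\ref{IP-cor} to pass to Fourier coefficients, matching to identify the raw Eisenstein series, and pullback to extract the coefficient identity.
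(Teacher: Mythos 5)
Your proposal is correct and follows exactly the route the paper intends: Proposition~\ref{prop-3.6} is stated there precisely as a consequence of Corollary~\ref{IP-cor} combined with the Basic Observation (\ref{basic-matching-id}), with the sign $\e_{V,V'}$ arising from the $(-1)^{md_+}$ absorbed into the normalization (\ref{good-Eis}), just as you track it. Nothing further is needed.
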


Now suppose that the quadratic spaces $V$ and $V'$ are isomorphic at all finite primes so that $V\simeq_f V'$ in the 
notation of Section~\ref{subsec-1.2}. Note that in this case $d_+(V)$ and $d_+(V')$ have the same parity and 
hence $\e_{V,V'}=1$. We then have identifications (\ref{naive-matching}) as at the end of Section~\ref{subsec-1.2}
which provide an automatic matching
\beq\label{taut-matching}
\ph \ \leftrightarrow\ \ph' = \rho^n_{V,V'}(\ph).
\eeq 
for all data, 
compatible with tensor products and with the Weil representation. 
We have the following comparison result, a more precise version of Theorem~B.  
\begin{theo}\label{theo3.4}
There is a linear map
$$\rho_{V,V'}: SC^\bullet(V) \lra SC^\bullet(V')$$
such that, for $\ph$ and $\ph'$ matching as in (\ref{taut-matching}), 
$$\rho_{V,V'}:  z_V(T,\ph) \mapsto z_{V'}(T,\ph').$$
Moreover, this map is a ring homomorphism and an isometry. 
\end{theo}
\begin{proof} The rings $SC^\bullet(V)$ and $SC^\bullet(V')$ are spanned by the classes $z_V(T,\ph)$ and $z_{V'}(T,\ph')$. 
Suppose that there is a linear relation in $SC^n(V)$
$$\sum_i c_i \,z_V(T_i,\ph_i)=0, \qquad \ph_i\in S(V(\A_f)^n), \ T_i\in \Sym_n(F), \ c_i\in \C.$$
Then, by (i) of Proposition~\ref{prop-3.6}, 
$$0 = \gs{\sum_i c_i \,z_V(T_i,\ph_i)}{z_V(T,\ph)} \ = \  \gs{\sum_i c_i \,z_{V'}(T_i,\ph'_i)}{z_{V'}(T,\ph')}$$
for all pairs $T$ and $\ph$. Here the Schwartz functions on the right side are matching those on the left via (\ref{taut-matching}). 
Since the inner product on the ring $SC^\bullet(V')$ is non-degenerate by construction, we
have
$$\sum_i c_i \,z_{V'}(T_i,\ph'_i)=0$$
in $SC^n(V')$. Thus the linear map $\rho_{V,V'}$ is well defined. 
By (ii) of Proposition~\ref{prop-3.6} this map is a ring homomorphism and by (i) it is an isometry. 
\end{proof} 

\begin{rem} (i) Of course, the comparison isomorphism of Theorem~\ref{theo3.4} is a consequence of the Siegel-Weil formula for anisotropic $V$. 
There should be an analogous comparison in the case where $d_+=d$ and $V$ is isotropic, but this will involve the 
use of a more delicate version of the (extended) Siegel-Weil formula and it seems quite likely that interesting correction 
terms will arise. \hfb
(ii) The twisted cycles $Z(T,\ph,\chibold)$ of Remarks~\ref{twisted-SCs} and \ref{rem-chi-classes} can be defined 
for both $Sh(V)_K$ and $SH(V')_K$ and one might imaging that the comparison isomorphism can be extended to these classes. 
This would require a variant of the Siegel-Weil formula giving an explicit description of the theta lifts $\theta_n(\chibold)$
from $\SO(V)$ to $\Sp_n$ 
of quadratic characters $\chibold$ of the spinor norm.  At present we do not have such a formula in general. 
The case of $\SO(3)$ and the metaplectic cover of $\SL_2$ is treated in \cite{snitz}. 
An interesting cubic analogue is considered in \cite{gan-snitz}. We plan to return to this question.
\end{rem}

\section{The case $d_+=0$}\label{section4}

In this section we suppose that $\Vp$ is a totally positive definite quadratic space of dimension $m+2$ over $F$. 
In this case, there is no associated Shimura variety, but we would like to nonetheless define a ring and to extend the comparison 
result of the previous section. 

\subsection{A ring associated to $\Vp$} 

Let $\Ze_n$ be the free abelian group on symbols $[U]_n$ where $U$ is a subspace of $\Vp$ with $\dim_F U \le n$, and let 
$$\Ze = \Ze(\Vp) = \bigoplus_{n=0}^m \Ze_n.$$
Writing $U_0 = \{0\}$ for the zero subspace of $\Vp$, we have a class $\trivn = [U_0]_0$, so that $\Ze_0 = \Z \,\trivn$, 
and a class $\cboldn = [U_0]_1\in \Ze_1$. Define a product on $\Ze$ by 
$$
[U_1]_{n_1}\cdot [U_2]_{n_2} = \begin{cases} [U_1+U_2]_{n_1+n_2} &\text{if $n_1+n_2\le m$,}\\
\nass
0&\text{otherwise.}
\end{cases}
$$
Here the `cutoff' at index $m$ is motivated by the comparison we will make below. 
Note that, $\trivn\cdot z = z$ for all $z$, and
$$ \cboldn\cdot [U]_n = \begin{cases} [U]_{n+1} &\text{if $n<m$,}\\
\nass 
0&\text{if $n=m$.}
\end{cases}
$$
Thus $\Ze$ is a graded commutative ring. 
The group $\GL(\Vp)$ acts naturally on $\Ze$ as ring automorphisms and the classes $\trivn$ and $\cboldn$ are invariant 
under this action.

For a finite subgroup $\Gamma$ in $\SO(\Vp)(F)$ and a subspace $U$ with $\dim_F U \le n$, let
$$Z_n(U)_\Gamma = \sum_{\gamma\in \Gamma/\Gamma_U} [\gamma U]_n\ \in \Ze_n,$$
where $\Gamma_U$ is the stabilizer of $U$ in $\Gamma$. 
Note that such classes span the space of $\Gamma$-invariants in $\Ze_n$. 
Then we have a `pullback' formula and a product formula, reminiscent of those for special cycles in the Shimura variety case, \cite{kudla.rem-gen}.  
\begin{lem}
(i) For a subgroup $\Gamma'\subset \Gamma$, 
$$Z_n(U)_{\Gamma} = \sum_{\gamma \in \Gamma'\back\Gamma/\Gamma_U} Z_n(\gamma U)_{\Gamma'}.$$
(ii) 
For $n_1+n_2\le m$, 
$$Z_{n_1}(U_1)_\Gamma \cdot Z_{n_2}(U_2)_{\Gamma} = \sum_{\gamma\in \Gamma\back I(U_1,U_2;\Gamma)} Z_{n_1+n_2}(W_\gamma)_\Gamma,$$
where 
$$I(U_1,U_2;\Gamma) = \Gamma/\Gamma_{U_1}\times \Gamma/\Gamma_{U_2},$$
and  \ 
$W_\gamma = \gamma_1 U_1+ \gamma_2 U_2$.
\end{lem}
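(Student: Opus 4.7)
The plan is to derive both identities purely combinatorially, from the orbit--stabilizer theorem and standard double-coset decompositions; no input beyond the definitions of $\Ze$ and of the orbit-sums $Z_n(U)_\Gamma$ is required, since everything in sight is finite and discrete.

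For part (i), I would start from the definition $Z_n(U)_\Gamma = \sum_{\sigma\Gamma_U\in\Gamma/\Gamma_U}[\sigma U]_n$ and decompose the index set $\Gamma/\Gamma_U$ into $\Gamma'$-orbits under left translation. These orbits are in natural bijection with the double cosets $\Gamma'\backslash\Gamma/\Gamma_U$, and the stabilizer in $\Gamma'$ of the point $\gamma\Gamma_U$ equals $\Gamma'\cap\gamma\Gamma_U\gamma^{-1}=\Gamma'_{\gamma U}$. Hence the contribution from the $\Gamma'$-orbit through $\gamma\Gamma_U$ is exactly $\sum_{\tau\in\Gamma'/\Gamma'_{\gamma U}}[\tau\gamma U]_n = Z_n(\gamma U)_{\Gamma'}$, and summing over the double cosets yields the pullback formula.

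For part (ii), I would first use distributivity and the definition of the product on $\Ze$ to expand
\begin{equation*}
Z_{n_1}(U_1)_\Gamma\cdot Z_{n_2}(U_2)_\Gamma \;=\; \sum_{(\sigma_1\Gamma_{U_1},\,\sigma_2\Gamma_{U_2})\in I} [\sigma_1 U_1+\sigma_2 U_2]_{n_1+n_2},
\end{equation*}
and then partition $I=I(U_1,U_2;\Gamma)$ into $\Gamma$-orbits under the diagonal action. The assignment $(\sigma_1,\sigma_2)\mapsto \sigma_1 U_1+\sigma_2 U_2$ is $\Gamma$-equivariant, so the orbit of a representative $(\sigma_1^0,\sigma_2^0)$ is sent onto the $\Gamma$-orbit of $W_\gamma=\sigma_1^0 U_1+\sigma_2^0 U_2$. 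Collecting the terms in each $\Gamma$-orbit on $I$ and recognising the resulting inner sum as a multiple of $Z_{n_1+n_2}(W_\gamma)_\Gamma$ is what delivers the stated identity.

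The main obstacle I anticipate is the stabilizer bookkeeping in (ii). The $\Gamma$-stabilizer of a pair $(\sigma_1,\sigma_2)\in I$ is the intersection $H_\gamma:=\Gamma_{\sigma_1 U_1}\cap\Gamma_{\sigma_2 U_2}$, which is contained in but can be strictly smaller than the stabilizer $\Gamma_{W_\gamma}$ of the sum; consequently the $\Gamma$-orbit of $(\sigma_1^0,\sigma_2^0)$ has cardinality $[\Gamma:H_\gamma]$, whereas $Z_{n_1+n_2}(W_\gamma)_\Gamma$ has only $[\Gamma:\Gamma_{W_\gamma}]$ summands. I would check how many times each subspace in the $\Gamma$-orbit of $W_\gamma$ arises from distinct pairs in the orbit of $(\sigma_1^0,\sigma_2^0)$, which comes down to the factorization $[\Gamma:H_\gamma] = [\Gamma:\Gamma_{W_\gamma}]\cdot[\Gamma_{W_\gamma}:H_\gamma]$, and verify that the parametrization $\gamma\in\Gamma\backslash I$ on the right-hand side correctly absorbs the factor $[\Gamma_{W_\gamma}:H_\gamma]$ through the choice of orbit representatives.
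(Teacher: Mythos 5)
Your argument for part (i) is correct and complete: the stabilizer in $\Gamma'$ of the coset $\gamma\Gamma_U$ under left translation is exactly $\Gamma'\cap\gamma\Gamma_U\gamma^{-1}=\Gamma'_{\gamma U}$, so each $\Gamma'$-orbit on $\Gamma/\Gamma_U$ contributes precisely $Z_n(\gamma U)_{\Gamma'}$, and the orbits are indexed by $\Gamma'\backslash\Gamma/\Gamma_U$. This is all that is behind the paper's remark that pullback is just the inclusion $(\Ze)^{\Gamma}\subset(\Ze)^{\Gamma'}$.

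For part (ii) you have isolated the crux exactly, but the verification you postpone cannot succeed. The index $[\Gamma_{W_\gamma}:H_\gamma]$ depends only on the orbit (replacing the representative $(\gamma_1,\gamma_2)$ by $\sigma\cdot(\gamma_1,\gamma_2)$ conjugates both $H_\gamma$ and $\Gamma_{W_\gamma}$ by $\sigma$), so no choice of representatives absorbs it: the $\Gamma$-orbit of $(\gamma_1,\gamma_2)$ contributes $\sum_{\sigma\in\Gamma/H_\gamma}[\sigma W_\gamma]_{n_1+n_2}=[\Gamma_{W_\gamma}:H_\gamma]\cdot Z_{n_1+n_2}(W_\gamma)_\Gamma$, and the multiplicity is genuinely present. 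For instance, let $\dim_F\Vp=4$ (so $m=2$), take $n_1=n_2=1$, let $s\in\SO(\Vp)(F)$ interchange $e_1\leftrightarrow e_2$ and $e_3\leftrightarrow e_4$ in an orthogonal basis, and set $\Gamma=\{1,s\}$, $U_1=U_2=Fe_1$. Then
$$Z_1(Fe_1)_\Gamma\cdot Z_1(Fe_1)_\Gamma=([Fe_1]_1+[Fe_2]_1)^2=[Fe_1]_2+[Fe_2]_2+2\,[Fe_1+Fe_2]_2,$$
whereas the right-hand side of (ii) runs over the two orbits $\{(1,1),(s,s)\}$ and $\{(1,s),(s,1)\}$ and gives $([Fe_1]_2+[Fe_2]_2)+[Fe_1+Fe_2]_2$, because $\Gamma_{Fe_1+Fe_2}=\Gamma$ while $H=\Gamma_{Fe_1}\cap\Gamma_{Fe_2}=\{1\}$. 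So the identity as stated holds only when $H_\gamma=\Gamma_{W_\gamma}$ for every orbit (for example when $\Gamma$ is trivial, as at neat level); in general your computation, stopped at the stage $\sum_{\gamma\in\Gamma\backslash I}\sum_{\sigma\in\Gamma/H_\gamma}[\sigma W_\gamma]_{n_1+n_2}$, proves the corrected formula carrying the extra factor $[\Gamma_{W_\gamma}:H_\gamma]$. You should either prove that corrected version or record the hypothesis; note that nothing downstream is affected, since the product formula actually used later (for the weighted classes $Z(T,\ph)$ in Section~\ref{section4.3}) is derived directly from the sum over vectors $x$ and not from this lemma.
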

Here `pullback' simply amounts to the inclusion of $(\Ze)^{\Gamma}$ in $(\Ze)^{\Gamma'}$. 

We define a degree function on $\Ze$ by 
$$ 
\deg([U]_n) = \begin{cases} 1 &\text{if $n=m$,}\\
\nass
0&\text{otherwise,}
\end{cases}
$$
and extend by linearity.  This map is invariant under the action of 
$\GL(\Vp)$.  There is a symmetric bilinear inner product on $\Ze$ defined by 
$$\gs{z_1}{z_2} = \deg(\,z_1\cdot z_2\,).$$
This inner product has a very large radical $\mathcal R \Ze$. For example, 
\beq\label{vanish-1}
\gs{[U_1]_{m-n}}{[U_2]_{n}-[U_3]_n} = 0,
\eeq
for all subspaces $U_1$ of dimension $\le m-n$ and $U_2$ and $U_3$ of dimension $\le n$. 
The radical is an ideal in $\Ze$ and we consider the quotient ring $\Zer = \Ze/\mathcal R \Ze$. 
We write $\btriv$ and $\bcbold$ for the images of the classes $\trivn$ and $\cboldn$ in $\Zer$.
\begin{lem} \label{lem4.2} The ring $\Zer$ is isomorphic to a truncated polynomial ring, 
$$\Z[y]/(y^{m+1}) \isoarrow  \Z[\bcbold] \isoarrow \Zer.$$
In particular, the image of a class $[U]_n$ in $\Zer$ is $\bcbold^{n}$. 
\end{lem}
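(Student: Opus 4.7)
The plan is to show that every class $[U]_n$ becomes equal to $\bcbold^n$ in the quotient $\Zer$, so that $\Zer$ is generated by $\bcbold$, and then to verify that $\bcbold$ satisfies exactly the relation $\bcbold^{m+1}=0$ with $\bcbold^n\ne 0$ for $0\le n\le m$.

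\textbf{Step 1: identify $\bcbold^n$ with $[U_0]_n$.} By the definition of the product, $\cboldn\cdot[U_0]_n=[U_0+U_0]_{n+1}=[U_0]_{n+1}$ for $n<m$. A trivial induction starting from $[U_0]_0=\trivn$ gives $\cboldn^{n}=[U_0]_n$ in $\Ze$ for all $0\le n\le m$, and $\cboldn^{m+1}=0$ by the cutoff rule. In particular $\bcbold^n$ is the image of $[U_0]_n$.

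\textbf{Step 2: use the grading to reduce the radical condition.} Because the multiplication takes $\Ze_{n_1}\cdot\Ze_{n_2}$ into $\Ze_{n_1+n_2}$ when $n_1+n_2\le m$ and to $0$ otherwise, and because $\deg$ is nonzero only on $\Ze_m$, the pairing $\gs{z}{w}$ for $z\in\Ze_n$ depends only on the $\Ze_{m-n}$-component of $w$. Therefore, to show that a homogeneous element $z\in\Ze_n$ lies in the radical $\mathcal R\Ze$, it suffices to check that $\gs{z}{w}=0$ for every $w\in\Ze_{m-n}$.

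\textbf{Step 3: prove $[U]_n\equiv[U_0]_n\pmod{\mathcal R\Ze}$.} Fix $U\subseteq\Vp$ with $\dim_F U\le n$. For any subspace $U_1$ with $\dim_FU_1\le m-n$, the product formula gives $[U_1]_{m-n}\cdot[U]_n=[U_1+U]_m$ and $[U_1]_{m-n}\cdot[U_0]_n=[U_1]_m$, both of which have degree $1$. Consequently
\begin{equation*}
\gs{\,[U_1]_{m-n}\,}{\,[U]_n-[U_0]_n\,}=\deg([U_1+U]_m)-\deg([U_1]_m)=1-1=0,
\end{equation*}
which is exactly (\ref{vanish-1}). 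By Step 2 this forces $[U]_n-[U_0]_n\in\mathcal R\Ze$, so the image of $[U]_n$ in $\Zer$ equals $[U_0]_n=\bcbold^n$, as asserted in the ``in particular'' clause.

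\textbf{Step 4: assemble the ring isomorphism.} From Step 3, the classes $\bcbold^n$, $0\le n\le m$, span $\Zer$, and from Step 1 they satisfy $\bcbold^{m+1}=0$. To see that $\bcbold^0,\bcbold^1,\dots,\bcbold^m$ are $\Z$-linearly independent in $\Zer$, observe that $\gs{\bcbold^n}{\bcbold^{m-n}}=\deg([U_0]_m)=1$, so the Gram matrix of this family against the reversed family is anti-diagonal with $1$'s, hence invertible over $\Z$; thus no nonzero integral combination can lie in the radical. Therefore the surjection $\Z[y]\to\Z[\bcbold]\subseteq\Zer$ sending $y\mapsto\bcbold$ has kernel exactly $(y^{m+1})$ and its image is all of $\Zer$, giving the chain of isomorphisms in the statement. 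I do not expect a serious obstacle here; the only point that requires a moment's care is the reduction in Step 2, which is what makes the apparently weak identity (\ref{vanish-1}) actually force an element to lie in the radical.
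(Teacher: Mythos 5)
Your proof is correct and follows essentially the same route as the paper: the key point in both is the vanishing identity (\ref{vanish-1}), which forces $[U]_n\equiv[U_0]_n=\bcbold^n$ modulo the radical. You simply make explicit two things the paper leaves implicit — the reduction to testing the pairing against homogeneous elements of complementary degree, and the anti-diagonal Gram matrix argument showing $\bcbold^0,\dots,\bcbold^m$ remain independent in $\Zer$ — both of which are fine.
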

\begin{proof}
Since $\Ze_0=\Z\,\trivn$, we have $\mathcal R\Ze \cap \Ze_m = \ker(\deg)$ and so $\Zer_0= \Z\,\triv$ and $\Zer_m = \Z \,\bcbold^m$. 
Now, by (\ref{vanish-1}), we have 
$[U_2]_n-[U_3]_n \in \mathcal R\Ze$ for all $U_2$ and $U_3$ of dimension $\le n$ and so
$$\sum_i a_i [U_i]_n \equiv (\sum_i a_i)\,(\cboldn)^n \ \mod \mathcal R\Ze.$$
This proves the claim. 
\end{proof}

\subsection{A replacement for cohomology}

For\footnote{Here we take $\SO(\Vp)$ rather than $\GSpin(\Vp)$ since there is no Shimura variety construction 
involved.}  $\Gp=R_{F/\Q}\SO(\Vp)$ and a compact open subgroup $K$ in $\Gp(\A_f)$, consider the space 
$$\HSC^\bullet(\Vp)^\nat_K:=C(\Gp(\A_f)/K,\Ze)^{\Gp(\Q)} $$ 
of functions 
$\zbold: \Gp(\A_f) \lra \Ze$ such that 
$$\zbold(\gamma g k) = \gamma\, \zbold(g),\qquad \forall \gamma\in \Gp(\Q), \ k\in K.$$
Then 
$$\HSC^\bullet(\Vp)^\nat_K = \bigoplus_{n=0}^m \HSC^n(\Vp)^\nat_K, \qquad \HSC^n(\Vp)^\nat_K=C(\Gp(\A_f)/K,\Ze_n)^{\Gp(\Q)},$$
is a graded ring.  
If we write\footnote{Note that we do not have strong approximation in this case so that the double coset space does not have a group structure. }
\beq\label{coset-decomp}
\Gp(\A_f) = \bigsqcup_j \Gp(\Q) g_j K,
\eeq
there is an isomorphism
\beq\label{Ze-iso}
\HSC^\bullet(\Vp)^\nat_K=C(\Gp(\A_f)/K,\Ze)^{\Gp(\Q)} \isoarrow \prod_j (\Ze)^{\Gamma_{j}},\qquad \zbold \mapsto [ \dots, \zbold(g_j), \dots].
\eeq
Here $\Gamma_j = \Gamma_{g_j} = \Gp(\Q)\cap g_j K g_j^{-1}$. Note that for $g\in \Gp(\A_f)$,  
the group $\Gamma_g = \Gp(\Q)\cap g K g^{-1}$ is finite and is trivial if $K$ is neat. 
Varying $K$, we have the space of continuous functions 
$$\HSC^\bullet(\Vp)^\nat= \varinjlim_K \,\HSC^\bullet(\Vp)^\nat_K = C_{\text{cont}}(\Gp(\A_f),\Ze)^{\Gp(\Q)},$$
also a ring under pointwise operations. 

Let $d^Tg$ be Tamagawa measure on $\Gp(\A)$ and define a Haar measure $d_fg$ on $\Gp(\A_f)$ via the factorization  
$d^Tg=d_\infty g_\infty\,d_fg_f$, where the archimedean factor is normalized by $\vol(\SO(\Vp)(\R),d_\infty g)=1$. 
Define a degree map on $\HSC^\bullet(\Vp)^\nat$ by\footnote{Note that the constant $\ndeg(\zbold)$ is to be distinguished from 
the function
$$\deg(\zbold) \in  C_{\text{cont}}(\Gp(\Q)\back \Gp(\A_f)).$$
}
\begin{align*}
\ndeg(\zbold) :&= \int_{\Gp(\Q)\back \Gp(\A_f)} \deg(\zbold(g))\,d_f g.
\end{align*}
For $z\in \HSC^\bullet(\Vp)^\nat_K$, by (\ref{Ze-iso}) we have 
$$
\ndeg(\zbold) =\vol(K) \sum_j |\Gamma_j|^{-1}\,\deg(\zbold(g_j)).
$$
Since the Tamagawa number of $\SO(\Vp)$ is $2$, 
$$\vol(K) \sum_j |\Gamma_j|^{-1} = \int_{\Gp(\Q)\back \Gp(\A_f)} d_f g = \int_{\Gp(\Q)\back \Gp(\A)}\,d^T g =2,$$
and we can also write
\beq\label{classical-mass}
\ndeg(\zbold) = 2\,\frac{\sum_j |\Gamma_j|^{-1}\,\deg(\zbold(g_j))}{\sum_j |\Gamma_j|^{-1}}.
\eeq
On the other hand, if $K$ is neat, we have 
$$
\HSC^\bullet(\Vp)^\nat_K \simeq \prod_j \Ze, \qquad\text{and}\qquad \ndeg(\zbold) =\vol(K) \sum_j \deg(\zbold(g_j)).
$$

We define an inner product on $\HSC^\bullet(\Vp)^\nat$ by 
\beq\label{IP-int}
\gs{\zbold_1}{\zbold_2} = \ndeg(\zbold_1 \cdot \zbold_2) =\int_{\Gp(\Q)\back \Gp(\A_f)} \deg(\zbold_1(g)\cdot \zbold_2(g))\,d_f g,
\eeq
and we let 
$$\HSC^\bullet(\Vp) = \HSC^\bullet(\Vp)^\nat/\mathcal R \HSC^\bullet(\Vp)^\nat$$
be the quotient by the radical  $\mathcal R \HSC^\bullet(\Vp)^\nat$ of this pairing. 
\begin{lem}  The ring $\HSC^\bullet(\Vp)$ is a truncated polynomial ring, 
$$C_{\text{cont}}(\Gp(\Q)\back \Gp(\A_f))\tt_\Z \Z[\cbold] \isoarrow \HSC^0(\Vp)[\bcbold] \isoarrow \HSC^\bullet(\Vp),$$
where
$$\HSC^0(\Vp) = C_{\text{cont}}(\Gp(\A_f),\Zer_0)^{\Gp(\Q)} \simeq C_{\text{cont}}(\Gp(\Q)\back \Gp(\A_f)).$$
In particular, for a class $\zbold \in \HSC^n(\Vp)$, 
\beq\label{deg-formula4.3}
\zbold = \deg(\zbold\cdot \bcbold^{m-n})\,\bcbold^n.
\eeq
\end{lem}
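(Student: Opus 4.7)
The plan is to apply Lemma~\ref{lem4.2} \emph{fiberwise}. Since the quotient $\Ze \ra \Zer$ from that lemma is $\GL(\Vp)$-equivariant with trivial action on the target, post-composition will induce a graded ring homomorphism
$$\pi\colon \HSC^\bullet(\Vp)^\nat \lra C_{\text{cont}}(\Gp(\A_f),\Zer)^{\Gp(\Q)} \simeq \HSC^0(\Vp)^\nat[\bcbold]/(\bcbold^{m+1}),$$
sending $\zbold(g) = \sum_i a_i(g)[U_i(g)]_n$ to $f_\zbold\,\bcbold^n$, where $f_\zbold(g) = \sum_i a_i(g)$ descends to a continuous function on $\Gp(\Q)\back\Gp(\A_f)$. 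Surjectivity of $\pi$ is immediate, since $f\,\bcbold^n$ is hit by $f\cdot \cboldn^n$. The key step is then to identify $\ker\pi$ with the radical $\mathcal R\HSC^\bullet(\Vp)^\nat$ of the pairing (\ref{IP-int}); once this is done, $\pi$ descends to the desired ring isomorphism.

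One inclusion $\ker\pi \subseteq \mathcal R$ is straightforward: $\mathcal R\Ze$ is an ideal of $\Ze$ on which the pointwise degree vanishes (as in Lemma~\ref{lem4.2}), so if $\pi(\zbold)=0$ then $\zbold(g)\in\mathcal R\Ze$ for every $g$, whence every integrand $\deg(\zbold(g)\,\zbold'(g))$ in (\ref{IP-int}) is identically zero. For the reverse inclusion I would rely on the pointwise identity
$$\deg(\zbold(g)\cdot \cboldn^{m-n}) = f_\zbold(g),$$
which follows directly from $\cboldn^{m-n}=[U_0]_{m-n}$, the product rule $[U]_n\cdot[U_0]_{m-n}=[U]_m$, and $\deg([U]_m)=1$. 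If $f_\zbold\ne 0$, I would pick $g_0$ with $f_\zbold(g_0)\ne 0$, choose $K$ small enough so that $f_\zbold$ is constant on $\Gp(\Q) g_0 K$, and pair $\zbold$ against $\zbold' = h\cdot \cboldn^{m-n}$, where $h$ is the characteristic function of this double coset. The $\GL(\Vp)$-invariance of $\cboldn$ guarantees $\zbold'\in\HSC^{m-n}(\Vp)^\nat$, and the pairing then evaluates to $\vol(\Gp(\Q) g_0 K/\Gp(\Q))\cdot f_\zbold(g_0)\ne 0$, forcing $\zbold\notin \mathcal R$.

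Once the isomorphism is in hand, formula (\ref{deg-formula4.3}) should fall out by combining $\pi(\zbold) = f_\zbold\,\bcbold^n$ with the identity $\deg(\zbold\cdot \bcbold^{m-n}) = f_\zbold$ above, both interpreted in $\HSC^\bullet(\Vp)$, where the pointwise degree descends precisely because it vanishes on $\mathcal R\HSC^\bullet(\Vp)^\nat$.

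I anticipate the main obstacle to be the reverse inclusion $\mathcal R\subseteq \ker\pi$, where one must exhibit a separating test class for each $\zbold$ with $\pi(\zbold)\ne 0$. The identity $\deg(\zbold(g)\cdot \cboldn^{m-n})=f_\zbold(g)$ reduces this to a soft separation statement for a nonzero continuous function on $\Gp(\Q)\back\Gp(\A_f)$ against double-coset indicators; this is the only place where the analytic structure of the integrated pairing truly intervenes, the rest of the argument being purely algebraic consequences of Lemma~\ref{lem4.2}.
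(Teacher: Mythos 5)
Your proposal is correct and follows essentially the same route as the paper: both identify the radical of the integrated pairing with the subspace of functions valued in the pointwise radical $\mathcal R\Ze$ (your $\ker\pi$), with the easy inclusion coming from vanishing of the integrand and the nontrivial one from pairing against a test class supported on a single double coset, after which everything reduces to Lemma~\ref{lem4.2}. The only cosmetic difference is that your test class $h\cdot(\cboldn)^{m-n}$ is $\GL(\Vp)$-invariant and therefore sidesteps the neatness of $K$ that the paper invokes in order to place an arbitrary $z'\in\Ze$ at a single coset.
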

\begin{proof}  We claim that the subspace $C_{\text{cont}}(\Gp(\A_f),\mathcal R\Ze)^{\Gp(\Q)}$ of functions valued in the radical $\mathcal R\Ze$ 
of $\Ze$ coincides with the radical $\mathcal R\HSC^\bullet(\Vp)^\nat$. 
These functions are in the radical $\mathcal R\HSC^\bullet(\Vp)^\nat$ since, if $\zbold_1$ is such a function, the integrand 
$\deg(\zbold_1(g)\cdot \zbold_2(g))$ 
in (\ref{IP-int}) vanishes. On the other hand, if a function $\zbold$ lies in the radical  $\mathcal R \HSC^\bullet(\Vp)^\nat$, 
it is right $K$ invariant for some neat compact open $K$. For any $z'\in \Ze$, we can define a function $\zbold'_j \in  \HSC^\bullet(\Vp)^\nat_K$ 
with $\zbold'_j(g_i) = \delta_{ij} z'$. Then 
$$0=\gs{\zbold}{\zbold'_j}  = \vol(K) \,\gs{\zbold(g_j)}{z'},$$
so that $\zbold(g_j) \in \mathcal R\Ze$ for all $j$. Thus, 
$$\HSC^\bullet(\Vp) \simeq C_{\text{cont}}(\Gp(\A_f),\Zer)^{\Gp(\Q)}.$$
The lemma then follows from Lemma~\ref{lem4.2}. 
\end{proof}

\begin{rem}
Of course we could have taken 
$$\HSC^\bullet(\Vp) = C_{\text{cont}}(\Gp(\Q)\back \Gp(\A_f))[\bcbold]$$
as the definition of the `cohomology' ring in the $d_+=0$ case, but felt that the version based on $\Ze$ and $\HSC^\bullet(\Vp)^\nat$ 
provides more insight and, in particular, a better parallel with the construction in the $d_+>0$ case.  
\end{rem}

\subsection{Special cycles}\label{section4.3}
The analogue of the weighted special cycles are defined as follows. 
For $T\in \Sym_n(F)$, $n\ge1$,  and $\ph \in S(\Vp(\A_f)^n)^K$, we define
$$Z(T,\ph;K) \in \HSC^n(\Vp)^\nat_K$$ 
by 
\begin{align}\label{def-d+special}
Z(T,\ph;K)(g) &= 
 \sum_{\substack{ x\in \Vp(F)^n\\ \snass Q(x) = T\\ \snass\mod \Gamma_g}} \ph(g^{-1}x)\,Z_n(U(x))_{\Gamma_g},\notag\\
\nass
{}&= \sum_{\substack{ x\in \Vp(F)^n\\ \snass Q(x) = T}} \ph(g^{-1}x)\,[U(x)]_n,
\end{align}
where $U(x)$ is the subspace spanned by the components of $x$.  Note that the last expression is in fact independent of the 
choice of $K$, subject only to the condition that the weight function $\ph$ is $K$-invariant.  Thus we will omit $K$ from the notation.

\begin{rem} The rings $\Ze$ amd $\Zer$ as initially defined are $\Z$-algebras. Since the coefficient rings will play no 
role in our constructions, from now on we simply take complex coefficients and complex valued Schwartz functions. 
\end{rem}

Again we have a product formula. 
\begin{prop}  The product formula (\ref{prod-ch}) holds for the weighted classes,
$$
Z(T_1,\ph_1)\cdot Z(T_2,\ph_2) = \sum_{\substack{ T\in \Sym_{n_1+n_2}(F)_{\ge 0}\\ \snass T = \bpm \scr T_1&*\\{}^t*&\scr T_2\epm}} Z(T,\ph_1\tt\ph_2).
$$
\end{prop}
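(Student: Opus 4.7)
The plan is to evaluate both sides pointwise at an arbitrary $g\in \Gp(\A_f)$ and reduce the identity to a straightforward reindexing of a double sum. To avoid any complications from orbits, I will work with the second (non-$\Gamma_g$-quotient) expression for $Z(T,\ph)(g)$ in (\ref{def-d+special}), where the value at $g$ is literally the sum over all $x\in \Vp(F)^n$ with $Q(x)=T$ of $\ph(g^{-1}x)\,[U(x)]_n$. Substituting this for each factor on the left gives a double sum indexed by $(x_1,x_2)\in \Vp(F)^{n_1}\times\Vp(F)^{n_2}$ with $Q(x_i)=T_i$, whose summand is $\ph_1(g^{-1}x_1)\,\ph_2(g^{-1}x_2)\,[U(x_1)]_{n_1}\cdot[U(x_2)]_{n_2}$.

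The next step is to apply three elementary identifications: the product rule in $\Ze$ gives $[U(x_1)]_{n_1}\cdot [U(x_2)]_{n_2}=[U(x_1)+U(x_2)]_{n_1+n_2}$ when $n_1+n_2\le m$ and $0$ otherwise; the concatenation $x=(x_1,x_2)\in \Vp(F)^{n_1+n_2}$ satisfies $U(x)=U(x_1)+U(x_2)$; and the Schwartz function $\ph_1\tt\ph_2$ evaluates by the product $\ph_1(g^{-1}x_1)\,\ph_2(g^{-1}x_2)=(\ph_1\tt\ph_2)(g^{-1}x)$. The moment matrix of the concatenated vector has the block form
$$Q(x)=\bpm T_1 & * \\ {}^t* & T_2 \epm,$$
and conversely every $T\in\Sym_{n_1+n_2}(F)_{\ge0}$ of this shape arises from a unique choice of $(x_1,x_2)$-pair for each $x$ with $Q(x)=T$. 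Grouping the double sum by the total moment matrix $T$, the inner sum is exactly the definition of $Z(T,\ph_1\tt\ph_2)(g)$ for $n=n_1+n_2$, which yields the desired formula.

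The truncation beyond degree $m$ is automatic: if $n_1+n_2>m$, both sides vanish for trivial reasons (the product in $\Ze$ kills the left side, and $Z(T,\ph_1\tt\ph_2)$ has no definition in $\HSC^{n_1+n_2}$). The positivity condition $T\ge 0$ on the right side is also automatic, since any Gram matrix from a totally positive definite space is positive semidefinite.

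I do not expect any real obstacle. The only point requiring care is the finiteness of the sums involved: this follows from the compact support of $\ph_i$ in $\Vp(\A_f)^{n_i}$ together with total positive definiteness of $\Vp$, which forces $\{x\in \Vp(F)^{n_1+n_2}:Q(x)=T\}$ to be finite for each $T$ (indeed, intersecting with the $g$-translated lattice cut out by the support of $\ph_1\tt\ph_2$ gives finiteness, exactly as in the Shimura variety case of \cite{kudla.rem-gen}). The whole argument is structurally parallel to the proof of (\ref{prod-ch}) in \emph{loc.\ cit.}, simplified by the fact that here the classes $[U(x)]_n$ live in a purely combinatorial graded ring rather than in cohomology.
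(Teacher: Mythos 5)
Your proposal is correct and follows essentially the same route as the paper: evaluate both sides at $g$ using the unquotiented expression in (\ref{def-d+special}), expand the double sum over pairs $(x_1,x_2)$, apply the product rule $[U(x_1)]_{n_1}\cdot[U(x_2)]_{n_2}=[U(x_1)+U(x_2)]_{n_1+n_2}$, and regroup by the moment matrix of the concatenated vector. Your added remarks on finiteness and on the automatic positivity of $Q(x)$ for totally positive definite $\Vp$ are correct but are left implicit in the paper's three-line computation.
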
 
\begin{proof} Writing $n=n_1+n_2$,  we have
\begin{align*}
Z(T_1,\ph_1)\cdot Z(T_2,\ph_2)(g) & = \sum_{\substack{ x_1\in \Vp(F)^{n_1}\\ \snass Q(x_1) = T_1}} \sum_{\substack{ x_2\in \Vp(F)^{n_2}\\ \snass Q(x_2) = T_2}} 
\ph_1(g^{-1}x_1)\,\ph_2(g^{-1}x_2)\,\,[U(x_1)]_{n_1}\cdot[U(x_2)]_{n_2}\\
\nass
{}&=\sum_{\substack{ T\in \Sym_{n}(F)\\ \snass T = \bpm \scr T_1&*\\{}^t*&\scr T_2\epm}} \sum_{\substack{ x= [x_1,x_2]\in \Vp(F)^{n} \\ 
\snass Q(x) = T}} \ph_1\tt\ph_2(g^{-1}x)\,[U(x_1)+U(x_2)]_n\\
\nass
{}&=\sum_{\substack{ T\in \Sym_{n}(F)\\ \snass T = \bpm \scr T_1&*\\{}^t*&\scr T_2\epm}} Z(T,\ph_1\tt \ph_2)(g).
\end{align*}
\end{proof}

By (\ref{def-d+special}) and (\ref{deg-formula4.3}), the image of the class $Z(T,\ph)$  in $\HSC^n(\Vp)$ is
\beq\label{image-ZTph}
\zbold(T,\ph)(g) = \Repp(T,\ph)(g)\cdot\bcbold^n
\eeq
where 
\beq\label{def-rep-no}
\Repp(T,\ph)(g) := \sum_{\substack{ x\in \Vp(F)^n\\ \snass Q(x) = T}} \ph(g^{-1}x)
\eeq
is the representation number.
Note that $\Repp(T,\ph)\in  C_{\text{cont}}(\Gp(\Q)\back \Gp(\A_f))$.  
The product formula in the reduced ring $\HSC^\bullet(\Vp)$ amounts to a rather trivial identity for 
such representation numbers.

We let $\SC^\bullet(\Vp)^\nat$ be the subring of $\HSC^\bullet(\Vp)$ spanned by the special cycles $\zbold(T,\ph)$  together with the class 
$\btriv$. 
Thus 
$$\SC^0(\Vp)^\nat = \C \,\btriv \ \ \subset \ \ \HSC^0(\Vp) = C_{\text{cont}}(\Gp(\Q)\back \Gp(\A_f))\,\btriv$$ 
is just the subspace of constant functions. 
Let 
$$\SC^\bullet(\Vp) = \SC^\bullet(\Vp)^\nat/\mathcal R \SC^\bullet(\Vp)^\nat$$
be the quotient of this subring by the radical of the restriction of the inner product. 
For special cycles in complementary degrees $n_1$ and $n_2$ with $n_1+n_2=m$, the inner product is given by
\beq\label{new-IP}
\gs{\zbold(T_1,\ph_1)}{\zbold(T_2,\ph_2)} = \int_{\Gp(\Q)\back \Gp(\A_f)} \Repp(T_1,\ph_1)(g) \,\Repp(T_2,\ph_2)(g) \,dg.
\eeq
We denote the image of $\zbold(T,\ph)\in \SC^\bullet(\Vp)^\nat$ in $\SC^\bullet(\Vp)$ by $\zbold(T,\ph)^\flat$. 

\begin{lem} (i) For $n\le \frac12 m$, the map 
$\SC^n(\Vp)^\nat \lra \SC^n(\Vp)$ is an isomorphism. \hfb 
(ii) 
On the other hand, for $n=m$, $\SC^m(\Vp) = \C\, \bcbold^m$ and the map 
$\SC^m(\Vp)^\nat \lra \SC^m(\Vp)$ is given by
$$\zbold \mapsto \zbold^\flat=\ndeg(\zbold)\cdot \bcbold^m.$$
\end{lem}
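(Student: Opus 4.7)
The plan is to identify $\HSC^\bullet(\Vp) \simeq C_{\text{cont}}(\Gp(\Q)\back\Gp(\A_f))[\bcbold]/(\bcbold^{m+1})$ via Lemma~4.3 and to use (\ref{image-ZTph}) to write any class in $\SC^n(\Vp)^\nat$ as $f(g)\,\bcbold^n$, where $f = \sum_i c_i\,\Repp(T_i,\ph_i)$ is a finite $\C$-linear combination of representation-number functions. Since $\bcbold^n$ generates a free $\HSC^0(\Vp)$-module for each $n\le m$, the function $f$ is uniquely determined by the class. For complementary degrees $n_1+n_2=m$, the restriction of (\ref{IP-int}) to $\SC^\bullet(\Vp)^\nat$ reads
$$
\gs{f_1\,\bcbold^{n_1}}{f_2\,\bcbold^{n_2}} \;=\; \int_{\Gp(\Q)\back\Gp(\A_f)} f_1(g)\,f_2(g)\,d_f g,
$$
since $\deg(\bcbold^m)=1$; this is the common computational engine for both parts.

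For (i), assuming $n\le m/2$ so that $m-2n\ge 0$, I would fix a Schwartz function $\ph_0\in S(\Vp(\A_f)^{m-2n})$ with $\ph_0(0)=1$. Because $\Vp$ is totally positive definite, $Q(x)=0$ forces $x=0$, so (\ref{def-d+special}) gives $\zbold(0,\ph_0)(g) = [U_0]_{m-2n} = (\cboldn)^{m-2n}$, whose image in $\SC^{m-2n}(\Vp)$ equals $\bcbold^{m-2n}$. Positive semidefiniteness forces the off-diagonal blocks of a matrix $T\in\Sym_{m-n}(F)_{\ge 0}$ with a vanishing lower-right block to be zero, so the product formula collapses to a single summand
$$
\zbold(T,\ph)\cdot\zbold(0,\ph_0) \;=\; \zbold\!\left(\bpm T&0\\0&0\epm,\,\ph\tt\ph_0\right) \in \SC^{m-n}(\Vp)^\nat,
$$
whose image in $\SC^{m-n}(\Vp)$ is $\Repp(T,\ph)(g)\,\bcbold^{m-n}$. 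Given $\zbold=\sum_i c_i\,\zbold(T_i,\ph_i)$ with associated $f$, I would then form the dual class $\zbold^\vee=\sum_i\bar c_i\,\zbold(T_i,\bar\ph_i)\cdot\zbold(0,\ph_0) \in \SC^{m-n}(\Vp)^\nat$, whose image is $\bar f(g)\,\bcbold^{m-n}$. If $\zbold$ lies in the radical, then $0=\gs{\zbold}{\zbold^\vee}=\int|f(g)|^2\,d_fg$, forcing $f\equiv 0$ and hence $\zbold=0$ already in $\HSC^n(\Vp)$, so a fortiori in $\SC^n(\Vp)^\nat$. Injectivity of the quotient map follows, and surjectivity is automatic from the definition.

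For (ii), the argument is lighter. The space $\SC^m(\Vp)^\nat$ is spanned by $f(g)\,\bcbold^m$, and the grading forces all nontrivial pairings to be against $\SC^0(\Vp)^\nat=\C\,\btriv$, for which $\gs{f(g)\,\bcbold^m}{\btriv} = \int f\,d_f g = \ndeg(f\,\bcbold^m)$. Hence the radical in $\SC^m(\Vp)^\nat$ is exactly $\ker(\ndeg)$. Since $\ndeg(\bcbold^m)\ne 0$ by the Tamagawa computation below (\ref{classical-mass}), the quotient is one-dimensional spanned by the image of $\bcbold^m$, and the quotient map is the one asserted (up to the scalar $\ndeg(\bcbold^m)^{-1}$ absorbed in the identification $\SC^m(\Vp)=\C\,\bcbold^m$).

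The hard part is (i): exhibiting an actual element of $\SC^{m-n}(\Vp)^\nat$ whose pairing with $\zbold$ recovers $\int|f|^2\,d_fg$. Both the positive-definiteness of $\Vp$, which collapses $\zbold(0,\ph_0)$ to a scalar multiple of $\bcbold^{m-2n}$, and the dimension inequality $n\le m/2$, which makes the $m-2n$-variable filler $\ph_0$ available, are essential to this construction. Outside this range, ring-theoretic manipulations no longer suffice and non-degeneracy would presumably require a density or spectral statement about representation numbers beyond what the present setup provides.
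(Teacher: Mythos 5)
Your argument is correct and is essentially the paper's own proof: the paper likewise pairs $\zbold$ against the conjugate class in complementary degree built from $T'=\bpm T&{}\\{}&0\epm$ and $\ph'=\bar\ph\tt\ph^0$ with $\ph^0(0)=1$, using anisotropy of $\Vp$ to get $\Repp(T',\ph')=\overline{\Repp(T,\ph)}$ and hence $\gs{\zbold}{\zbold'}=\int|f|^2$, while (ii) follows from $\gs{\zbold}{\btriv}=\ndeg(\zbold)$. Your only presentational difference is realizing this dual class as the product $\zbold(T,\bar\ph)\cdot\zbold(0,\ph_0)$ via the product formula rather than writing it down directly, which amounts to the same construction.
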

\begin{proof} For $n=0$ this is clear from the definition.  For $1\le n\le \frac12 m$, let $n' = m-n$. For given $T\in \Sym_n(F)$ and $\ph\in S(\Vp(\A_f)^n)$, 
let 
$$T' = \bpm T&{}\\{}&0\epm\in \Sym_{n'}(F), \qquad \ph' = \bar\ph\tt \ph^0 \in S(\Vp(\A_f)^{n'}), $$
where $\ph^0\in S(\Vp(\A_f)^{n'-n})$ with $\ph^0(0)=1$. Then, since $\Vp$ is anisotropic, 
$$\Repp(T',\ph') = \Repp(T,\bar\ph) = \overline{\Repp(T,\ph)}.$$ 
For $\zbold$ a complex linear combination of classes $\zbold(T,\ph)$'s let $\zbold'$ be the corresponding conjugate linear 
combination of the $\zbold(T',\ph')$'s, so that $\zbold'(g) = \overline{\zbold(g)}$. Then 
$$\gs{\zbold}{\zbold'} = \int_{\Gp(\Q)\back \Gp(\A_f)} |\zbold(g)|^2\,dg =0 \ \iff \zbold =0.$$
The second statement follows from the fact that 
$\gs{\zbold}{\btriv} = \ndeg(\zbold)$. 
\end{proof}

\subsection{Generating series}\label{section4.4}
The generating series analgous to (\ref{B-gen-fun}) for these `special cycles' is
\beq\label{fake-gen-fun1}
\phi_n^{\SC(\Vp)^\nat}(\tau, \ph)  = \sum_{T\in \Sym_n(F)_{\ge0}} \zbold(T,\ph)\, \qq^T \ \in \SC^n(\Vp)^\nat[[\qq]].
\eeq
The image of the series (\ref{fake-gen-fun1}) 
in $\SC^n(\Vp)[[q]]$ is 
\beq\label{fake-gen-fun2}
\phi_n^{\SC(\Vp)}(\tau, \ph)  = \sum_{T\in \Sym_n(F)_{\ge0}} \zbold(T,\ph)^\flat\, \qq^T \ \in \SC^n(\Vp)[[\qq]].
\eeq
Evaluating at $g\in \Gp(\A_f)$ 
and using (\ref{image-ZTph}) we have
\begin{align}
\label{fake-gen-fun4}
\phi_n^{\SC(\Vp)^\nat}(\tau, \ph)(g)  &=\bcbold^n\cdot \sum_{T\in \Sym_n(F)_{\ge0}} \Repp(T,\ph)(g)\,\qq^T\notag\\
\nass
{}&=  \bcbold^n\cdot \sum_{\substack{ x\in \Vp(F)^n}} \ph(g^{-1}x)\,\qq^{Q(x)} \\
\nass
{}&=\bcbold^n\cdot N(\det(v))^{-\frac{m+2}4} \,\theta(g'_\tau,g;\phbold^{(n)}_\infty\tt\ph),\notag
\end{align}
where the function in the last line is a multiple of the classical theta series (\ref{J-theta-form}) given by 
$$ 
\theta(g',g;\phbold^{(n)}_\infty\tt\ph) = \sum_{x\in \Vp(F)^n} \o(g')(\phbold^{(n)}_\infty\tt\ph)(g^{-1}x),
$$ 
for
\beq\label{arch-SF-2}
\phbold^{(n)}_\infty= \breve\phbold^{(n)}_\infty=\bigotimes_{\s} \ph_{\s,+}^0 
\eeq
where $\ph_{\s,+}^0\in S((\Vp)_\s^n)$  is the Gaussian for $(V_+)_\s$.

Now we have the analogues of the formulas of Section~\ref{section2}.  First we have the analogue of Proposition~\ref{prop2.2}, where in the 
third step we use (\ref{fake-gen-fun4}), 
\begin{align*}
\ndeg(\phi_n^{\SC(\Vp)}(\tau,\ph)\cdot \bcbold^{m-n}) & = 
\int_{\Gp(\Q)\back \Gp(\A_f)}
	\deg(\phi_n^{\SC(\Vp)^\nat}(\tau,\ph)(g)\cdot \bcbold^{m-n})\,d_fg\\
\nass
{}&=\int_{\Gp(\Q)\back \Gp(\A_f)}
	N(\det(v))^{-\frac{m+2}4} \,\theta(g'_\tau,g;\phbold^{(n)}_\infty\tt\ph)\,d_fg\\
\nass
{}&=N(\det(v))^{-\frac{m+2}4} \, \int_{\Gp(\Q)\back \Gp(\A)}
	\theta(g'_\tau,g;\phbold^{(n)}_\infty\tt\ph)\,d^Tg\\
\nass
{}&= 2\,E(\tau,s_0,\l_{(\Vp)_f}(\ph)).
\end{align*}
Here in the last step we use the Siegel-Weil formula and the expression in (\ref{good-Eis}) for the special value at $s=s_0 = \kappa-\rho_n$ of the Siegel-Eisenstein series. 

If $\ph$ is $K$-invariant for a compact open subgroup $K$ in $\Gp(\A_f)$, then, using (\ref{coset-decomp}),  (\ref{classical-mass})
and  (\ref{Eis-FC}), we have
Siegel's classical formula
\beq\label{Siegel!}
\ndeg(\zbold(T,\ph)^\flat\cdot \bcbold^{m-n}) = 2\,\frac{\sum_j |\Gamma_j|^{-1}\,\Repp(T,\ph)(g_j)}{\sum_j |\Gamma_j|^{-1}} = 2\,A(T,\l_{V_f}(\ph))
\eeq
relating representation numbers and Fourier coefficients of Eisenstein series. This is the analogue of Corollary~\ref{cor2.2}.

The analogues of Theorem~A and Corollary~\ref{IP-cor} follow in the same way.  We will not restate them here. 
They imply that the product structure and inner product of classes $\zbold(T,\ph)^\flat$ in the ring $\SC^\bullet(\Vp)$ are once again 
given by Fourier coefficients of pullbacks of Hilbert-Siegel Eisenstein series.  
The product structure in the quotient ring is now much more subtle than that in $\SC^\bullet(\Vp)^\nat$, as it involves the inner products (\ref{new-IP}) 
of the functions $\Repp(T,\ph)$ on $\Gp(\Q)\back \Gp(\A_f)$.  For example, 
if  $m=2n$ is even and $T_i\in \Sym_n(F)$ and $\ph_i\in S(\Vp(\A_f)^n)$, then 
$$\zbold(T_1,\ph_1)^\flat\cdot \zbold(T_2,\ph_2)^\flat = \int_{\Gp(\Q)\back \Gp(\A_f)} \Repp(T_1,\ph_1)(g) \,\Repp(T_2,\ph_2)(g)\,dg \cdot \bcbold^m.$$

\subsection{Another comparison}  
The results of Sections~\ref{section4.3} and \ref{section4.4} imply that we again have a comparison isomorphism.
\begin{theo}\label{theo4.6}  For a quadratic space $V$ over $F$ with $d_+(V)$ even, let $V_+$ be the associated totally positive definite space. 
Fix an isometry $\rho_{V,V_+}: V(\A_f) \isoarrow V_+(\A_f)$. 
Let $\SC^\bullet(V_+)$ be the `special cycle' ring for $V_+$ defined in Section~\ref{section4.3}, and let $\SC^\bullet(V)$ be the 
reduced special cycle ring defined in Section~\ref{subsec-1.2}.
Then there is a linear map
$$\rho_{V,V_+}: \SC^\bullet(V) \lra \SC^\bullet(V_+)$$
such that, for $\ph$ and $\ph'$ matching as in (\ref{taut-matching}), 
$$\rho_{V,V_+}:  z_V(T,\ph) \mapsto \zbold_{V_+}(T,\ph')^\flat.$$
Moreover, this map is a ring homomorphism and an isometry. 
\end{theo}

Here we have added a subscript to indicate where the classes live.

Thus, the reduced special cycle rings $\SC^\bullet(V)_K$ for the Shimura varieties $Sh(V)_K$ for such $V$ with $d_+(V)$ even are all modeled on
the subquotient $\SC^\bullet(\Vp)$ of the truncated polynomial ring
\begin{align*}
\HSC^\bullet(\Vp)_{\bar K}&\isoarrow C_{\text{cont}}(\Gp(\Q)\back \Gp(\A_f)/\bar K)[\bcbold].
\end{align*}
Here our choice of isometry $\rho_{V,V_+}: V(\A_f) \isoarrow V_+(\A_f)$ gives an identification 
$G(\A_f) \isoarrow \GSpin(V_+)(\A_f)$, and we write $\bar K$ for the image of $K\subset G(\A_f)$ in $G_+(\A_f)= \SO(V)(\A_f)$.

\section{Local matching conditions}\label{section5}

It is interesting to see how much matching can occur in cases where the spaces $V$ and $V'$ are not locally isometric at all places. 
For $V$ and $V'$ as in Proposition~\ref{prop3.5},  there is a finite set of places 
$$\Delta = \Delta(V,V') = \{ \,\pp \mid V_\pp \not\simeq V'_\pp\,\} = \{ \,\pp \mid \e(V_\pp) = - \e(V'_\pp)\,\},$$
where the Hasse invariants $\e(V_\pp)$ and $\e(V'_\pp)$ differ.  
We fix an isomorphism 
\beq\label{almost.matching}
V(\A_f^\Delta) \isoarrow V'(\A_f^\Delta),
\eeq
where the superscript $\Delta$ means that the places in $\Delta$ have been omitted. 
This gives isomorphisms
$$S(V(\A_f^\Delta)^n) \isoarrow S(V'(\A_f^\Delta)^n), \qquad \ph \mapsto \ph'$$
for all $n$, compatible with  the Weil representation and with tensor products. In particular, the functions $\ph$ and $\ph'$ are matching.  
The existence of matching pairs of functions in $S(V(\A_f)^n)$ and $S(V'(\A_f)^n)$ then reduces to a local problem at places $\pp\in  \Delta$. 

\subsection{Local matching conditions}
In this section, we describe the conditions required for local matching using the results of \cite{KR.rdps}, for $m$ even, 
and \cite{sweet.meta}, for $m$ odd. We change notation and let $F$ be a non-archimedean local field of characteristic $0$
with a fixed non-trivial unitary additive character $\psi: F\ra \C^\times$.  For a fixed $m\ge 1$, let 
$$G' =G'_n= \begin{cases} \Sp_n(\F) &\text{if $m$ is even} \\
\nass
\nass
\widetilde{\Sp_n}(F)&\text{its metaplectic cover, if $m$ is odd.}
\end{cases}
$$ 
For a quadratic space $V$ over $F$ of dimension\footnote{Note the shift in notation from the previous sections.} $m$ and character $\chi_V$,  let  $\o_V=\o_{V,\psi}$ be the Weil representation of $G'$ 
on $S(V^n)$. For the intertwining map 
$$\l_V: S(V^n) \lra I_n(s_0,\chi_V), \qquad \l_V(\ph)(g') = \o(g')\ph(0), \quad s_0= \frac{m}2-\rho_n,$$
to the degenerate principal series representation $I_n(s,\chi_V)$ of $G'$ at the point $s_0$,  
we let 
$$R_n(V) = \l_V(S(V^n)) \  \subset I_n(s_0,\chi_V)$$
be the image. 
For a fixed quadratic character $\chi$ of $F^\times$, these submodules account for the constituents of $I_n(s_0,\chi)$. 
In the following description of the $R_n(V)$'s, for convenient reference, we give more complete information than needed for our application to matching. 
For $m$ even, these results are quoted from \cite{KR.rdps}, while, $m$ odd the results are due to Sweet, \cite{sweet.meta}. 

We consider quadratic spaces $V$ with $\chi_V=\chi$ and we vary $\dim V=m$. For a fixed $m$, 
the isometry class of such a space $V$ is determined by its Hasse invariant $\e(V)=\pm1$. 
In particular, up to isometry,  there are two such spaces $V_{m,\pm}$, except 
when $m=1$ (resp. $m=2$ and $\chi=1$), in which case there is only one such space $V_1$ (resp. $V_2$).  
The `generic' picture of the $R_n(V)$'s is quite simple. 
\begin{prop}
\begin{itemize}
\item[(i)]  For $3\le m < n+1$, or for $m=2$, $\chi\ne1$, we have $s_0<0$, and the representations $R_n(V_{m,\pm})$ are irreducible and distinct. 
Their sum  $R_n(V_{m,+})\oplus R_n(V_{m,-})$ 
is the socle of $I_n(s_0,\chi)$ and the quotient $I_n(s_0,\chi)/R_n(V_{m,+})\oplus R_n(V_{m,-})$ is irreducible. 
\item[(ii)] For $m=n+1$, we have $s_0=0$. If $n=1$  and $\chi=1$, $R_1(V_2)=I_1(0,\chi)$ is irreducible. 
Otherwise,  the spaces $R_n(V_{n+1,\pm})$ are irreducible and distinct
and 
$$I_n(0,\chi) = R_n(V_{n+1,+})\oplus R_n(V_{n+1,-}).$$ 
\item[(iii)] For $n+1<m<2n$,   
the spaces $R_n(V_{m,\pm})$ are maximal subspaces of $I_n(s_0,\chi)$,  $I_n(s_0,\chi) = R_n(V_{m,+})+R_n(V_{m,-})$, $R_n(V_{m,+})\cap R_n(V_{m,-})$ is irreducible, and 
$$I_n(s_0,\chi)/R_n(V_{m,+})\cap R_n(V_{m,-}) \isoarrow R_n(V_{m',+})\oplus R_n(V_{m',-}),$$
where $m'=2n+2-m$ and the isomorphism is induced by the normalized intertwining map $A_n(s_0,\chi)$. \hfb 
The same statement holds for $m=2n$ when  $\chi\ne1$. 
\item[(iv)] For $m>2n+2$, or if $m=2n+2$ and $\chi\ne1$, $I_n(s_0,\chi) = R_n(V_{m,\pm})$ is irreducible. 
\end{itemize}
The following `edge' cases then complete the picture. 
\begin{itemize}
\item[(a)]  For $m=1$, we have $s_0=-\frac{n}2$. Then  $R_n(V_1)$ is the unique irreducible submodule of $I_n(-\frac{n}2,\chi)$, and 
the quotient 
$$I_n(-\frac{n}2,\chi)/R_n(V_1)\isoarrow R_n(V_{2n+1,-})$$
is irreducible. 
\item[(b)] For $m=2$,  $\chi=1$, and $n>1$, we have $s_0= -\rho_n+1$. Then $R_n(V_2)$ is the unique irreducible submodule of $I_n(-\frac{n}2+\frac12,\chi)$ and the quotient
$$I_n(-\frac{n}2+\frac12,\chi)/R_n(V_2) \isoarrow R_n(V_{2n,-})$$
is irreducible. 
\item[(c)] For $m=2n$ and $\chi=1$, we have $s_0= \frac{n-1}2$. Then  $I_n(\frac{n-1}2,\chi) = R_n(V_{2n,+})$, $R_n(V_{2n,-})$ is its unique irreducible submodule, and
$$I_n(\frac{n-1}2,\chi)/R_n(V_{2n,-}) \isoarrow R_n(V_2).$$
\item[(d)] For $m=2n+1$, we have $s_0= \frac{n}2$. Then  $I_n(\frac{n}2,\chi) = R_n(V_{2n+1,+})$, $R_n(V_{2n+1,-})$ is its unique irreducible submodule, and 
$$I_n(\frac{n}2,\chi)/R_n(V_{2n+1,-}) \isoarrow R_n(V_1).$$
\item[(e)] For $m=2n+2$ and $\chi=1$, we have $s_0=\rho_n$. Then $I_n(\rho_n,\chi) = R_n(V_{2n+2,+})$, 
$R_n(V_{2n+2,-})$ is its unique irreducible submodule, and
$$I_n(\rho_n,\chi)/R_n(V_{2n+2,-}) \isoarrow R_n(V_0),$$
where we formally view the trivial representation as $\triv = R_n(V_0)$. 
\end{itemize} 
Moreover, each of the quotients occurring in cases (b)--(e) is induced by the normalized intertwining operator $A_n(s_0,\chi)$, \cite{KR.rdps}, 
\cite{sweet.meta}. 
\end{prop}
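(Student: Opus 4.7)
This proposition collects results of Kudla--Rallis \cite{KR.rdps} (for $m$ even) and Sweet \cite{sweet.meta} (for $m$ odd); any proof must essentially reprise those strategies, so I sketch the common framework rather than redo each case.

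The argument rests on three local ingredients. First, one locates the reducibility points of $I_n(s,\chi)$ on the real axis by tracking the zeros and poles of the normalizing factor of the standard intertwining operator $A_n(s,\chi) : I_n(s,\chi) \to I_n(-s,\chi)$; these are precisely the values $s_0 = \frac{m}{2} - \rho_n$ arising from quadratic spaces $V$ of dimension $m$ with $\chi_V = \chi$. Second, one uses a local Siegel--Weil type identity: for the relevant $s_0$, the composition $A_n(s_0,\chi)\circ \l_V$ is, up to an explicit scalar, the intertwining map $\l_{V'}$ attached to the complementary space $V'$ of dimension $m' = 2n+2-m$ with the same character. Third, one invokes the local theta correspondence --- first occurrence in the Witt tower and the conservation relation --- to prove irreducibility of each $R_n(V_{m,\pm})$ and, when both exist, to distinguish them.

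The step-by-step plan would be to dispatch the generic ranges (i)--(iv) first, combining these three ingredients with a Jacquet-module/$K$-type composition-series count. Case (i) identifies $R_n(V_{m,+})\oplus R_n(V_{m,-})$ as the socle, with irreducible quotient matched via $A_n(s_0,\chi)$ to the intersection $R_n(V_{m',+})\cap R_n(V_{m',-})$ appearing in the mirror range (iii). Case (ii) is the self-dual point $s_0 = 0$, where $A_n(0,\chi)$ becomes an involution whose $\pm 1$ eigenspaces split $I_n(0,\chi)$ into the two $R_n(V_{n+1,\pm})$. Case (iv) is the tempered range, where the irreducibility of $I_n(s_0,\chi) = R_n(V_{m,\pm})$ is immediate from the analysis of the normalizing factor.

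The hardest part will be the edge cases (a)--(e), where one or both spaces $V_{m,\pm}$ are absent, $s_0$ coincides with a pole of the normalizing factor, or $\chi = 1$ produces extra reducibility (notably the trivial quotient in (e), formally the contribution of $V_0$). Each requires a careful analysis of the leading Laurent coefficient of $A_n(s,\chi)$ at the degenerate point, together with an identification of the image of $\l_V$ for the adjacent stage of the Witt tower, so that the missing constituent is correctly interpreted as a boundary term. These local computations are the main technical obstacle and are exactly the detailed content of \cite{KR.rdps} and \cite{sweet.meta}.
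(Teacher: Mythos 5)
The paper offers no proof of this proposition at all: it is presented purely as a quotation of the results of \cite{KR.rdps} (for $m$ even) and \cite{sweet.meta} (for $m$ odd), exactly as you recognize in your opening line. Your sketch of the underlying strategy in those references --- reducibility points from the normalizing factors, the local identity relating $A_n(s_0,\chi)\circ\l_V$ to $\l_{V'}$ for the complementary space $m'=2n+2-m$, and theta-theoretic arguments to separate and identify the constituents --- is a fair account of what the cited works do, so your approach coincides with the paper's.
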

Note that $R_n(V_1)$ is the even Weil representation of $G'_n$. 

Now return to the local matching problem and suppose that $V$ and $V'$ are quadratic spaces over $F$ of dimensions $m=\mbold+2$
and character $\chi$, but with opposite Hasse invariants. 
We now vary $n$ with $1\le n \le \mbold = m-2$. In particular, $m\ge 3$. 
For $\e=\pm1$, let
$$S^o(V_{m,\e}^n) = \{\,\ph \in S(V_{m,\e}^n)\mid \l_{V_{m,\e}}(\ph)\in R_n(V_{m,-\e}) \, \},$$
so that, tautologically, 
for every $\ph\in S^o(V_{m,\pm}^n)$ there is a matching function $\ph'\in S^o(V_{m,\mp}^n)$. 
Also note that, since $R_n(V_{m,-\e})$ is a $G'_n$ submodule of $I_n(s_0,\chi)$ and $\l_V$ is intertwining, 
$S^o(V_{m,\e}^n)$ is a $G'_n$-invariant subspace of $S(V_{m,\e}^n)$. 

\begin{prop}\label{prop3.7}
{\rm (1)} For $n<\frac{m}2-1$, or $n=\frac{m}2-1$ and $\chi\ne1$, $S^o(V_{m,\e}^n) = S(V_{m,\e}^n)$, and hence 
for every $\ph\in S(V_{m,\pm}^n)$ there is a matching function $\ph'\in S(V_{m,\mp}^n)$.
\hfb
{\rm (2)} For $n=\frac{m}2-1$ and $\chi=1$, $s_0=\rho_n$. Then, 
$$S^o(V_{m,-}^n)  = S(V_{m,-}^n), \qquad S^o(V_{m,+ }^n) = \ker(A_n(\rho_n,\chi)\circ\l_V),$$
and 
$$S(V_{m,+ }^n) /S^o(V_{m,+ }^n)  \isoarrow R_n(V_0)=\triv.$$
Similarly, 
for $n=\frac{m}2$ and $\chi=1$, $s_0=\rho_n-1$. Then, 
$$S^o(V_{m,-}^n)  = S(V_{m,-}^n), \qquad S^o(V_{m,+ }^n) = \ker(A_n(\rho_n-1,\chi)\circ\l_V),$$
and 
$$S(V_{m,+ }^n) /S^o(V_{m,+ }^n)  \isoarrow R_n(V_2).$$
{\rm (3)} 
For $n=\frac{m}2-\frac12$ so that  $s_0=\rho_n-\frac12=\frac{n}2$, 
$$S^o(V_{m,-}^n)  = S(V_{m,-}^n), \qquad S^o(V_{m,+ }^n) = \ker(A_n(\rho_n-\frac{1}2,\chi)\circ\l_V)$$
and 
$$S(V_{m,+ }^n) /S^o(V_{m,+ }^n)  \isoarrow R_n(V_1).$$
{\rm (4)} For $\frac{m}2<n\le m-2$, or for $n=\frac{m}2$ and $\chi\ne1$, there are exact sequences induced by the normalized intertwining
operator $A_n(s_0,\chi)$, 
$$0\lra S^o(V_{m,\pm}^n) \lra S(V_{m,\pm}^n) \lra R_n(V_{m',\pm}) \lra 0,\qquad m'=2n+2-m.$$
\end{prop}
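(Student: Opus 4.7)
The plan is to deduce each conclusion from the preceding structure theorem for the submodules $R_n(V_{m,\pm}) \subset I_n(s_0,\chi)$, specialized at $s_0 = \frac{m}{2} - \rho_n$. Since $\l_{V_{m,\e}}(\ph) \in R_n(V_{m,\e})$ tautologically, the matching condition $\ph \in S^o(V_{m,\e}^n)$ is equivalent to
$$\l_{V_{m,\e}}(\ph) \in R_n(V_{m,+}) \cap R_n(V_{m,-}).$$
Under our hypothesis $1 \le n \le m-2$ with $m \ge 3$, the triple $(m,n,\chi)$ falls into exactly one case of the structure theorem, and the subdivisions of Proposition~\ref{prop3.7} correspond respectively to cases (iv), (e), (c), (d), and (iii). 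Case~(1) is immediate: both $n < \frac{m}{2}-1$ (so $m > 2n+2$) and $n = \frac{m}{2}-1$ with $\chi\ne1$ (so $m=2n+2$, $\chi\ne1$) put us in case (iv), where $R_n(V_{m,+}) = R_n(V_{m,-}) = I_n(s_0,\chi)$, so the containment is automatic and $S^o(V_{m,\e}^n) = S(V_{m,\e}^n)$.

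For cases~(2) and~(3) I would invoke the edge cases (e), (c), (d) of the structure theorem respectively. In each of these, $R_n(V_{m,+}) = I_n(s_0,\chi)$ while $R_n(V_{m,-})$ is the unique irreducible submodule, which coincides with $\ker A_n(s_0,\chi)$, and the resulting cokernel is $R_n(V_0) = \triv$, $R_n(V_2)$, or $R_n(V_1)$ respectively. Pulling back through $\l_{V_{m,\pm}}$ yields $S^o(V_{m,-}^n) = S(V_{m,-}^n)$, $S^o(V_{m,+}^n) = \ker(A_n(s_0,\chi) \circ \l_{V_{m,+}})$, and the claimed identification of the quotient. The only routine check is that the three ranges indeed correspond to $s_0 = \rho_n$, $\rho_n-1$, $\rho_n-\frac12$, which is a one-line computation using $\rho_n = \frac{n+1}{2}$.

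Finally, case~(4) places us in case (iii): the hypothesis $\frac{m}{2}<n\le m-2$ (or $n = \frac{m}{2}$ with $\chi\ne1$) translates to $n+1 < m < 2n$ (or $m = 2n$, $\chi\ne1$). There the spaces $R_n(V_{m,\pm})$ are distinct maximal proper subspaces of $I_n(s_0,\chi)$, and $A_n(s_0,\chi)$ induces
$$I_n(s_0,\chi) / R_n(V_{m,+}) \cap R_n(V_{m,-}) \isoarrow R_n(V_{m',+}) \oplus R_n(V_{m',-}), \qquad m' = 2n+2-m.$$
To extract the stated exact sequence I need the sign-compatibility that $A_n(s_0,\chi)$ carries $R_n(V_{m,\e})$ onto the $\e$-summand $R_n(V_{m',\e})$ with matching Hasse invariant. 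This is the main obstacle: it is not forced by the abstract module structure of $I_n(s_0,\chi)$ and depends on the explicit constituent-level analysis in \cite{KR.rdps} (for $m$ even) and \cite{sweet.meta} (for $m$ odd) together with the fixed normalization of $A_n$. Granting this compatibility, restricting the displayed isomorphism to $R_n(V_{m,\e})$ produces the short exact sequence $0 \to R_n(V_{m,+}) \cap R_n(V_{m,-}) \to R_n(V_{m,\e}) \to R_n(V_{m',\e}) \to 0$, and its pullback via $\l_{V_{m,\e}}$ is the statement of case~(4).
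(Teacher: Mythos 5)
Your derivation is correct and is exactly the argument the paper intends: Proposition~\ref{prop3.7} is read off case by case from the preceding structure theorem for the $R_n(V_{m,\pm})\subset I_n(s_0,\chi)$, using that $\ph\in S^o(V_{m,\e}^n)$ iff $\l_{V_{m,\e}}(\ph)\in R_n(V_{m,+})\cap R_n(V_{m,-})$ and that the quotients are induced by $A_n(s_0,\chi)$. The sign-compatibility you flag in case~(4) — that $A_n(s_0,\chi)$ carries $R_n(V_{m,\e})$ onto $R_n(V_{m',\e})$ for the complementary space of the \emph{same} Hasse invariant — is indeed supplied by the explicit constituent analysis of \cite{KR.rdps} and \cite{sweet.meta}, which is precisely what the paper is quoting.
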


\begin{rem}\label{rem5.3} (i)
For a fixed $n$, Proposition~\ref{prop3.7}, together with the isomorphism (\ref{almost.matching}), provides a good supply of matching pairs to which Proposition~\ref{prop3.5} can be applied. \hfb
(ii)
Comparison of inner products is more difficult to achieve, and it is not clear if one can expect to find isomorphisms like that of Theorem~\ref{theo3.4}. 
\hfb
(iii)
For example, suppose that $m=\mbold +2$ is even and $\chi\ne1$. Take $n = \frac12\mbold$, i.e., $m=2n+2$.  Then by (iv), $R_n(V_{m,\pm}) = I_n(s_0,\chi)$
and hence every function $\ph \in S(V^n_{m,+})$ has a matching function $\ph'\in S(V^n_{m,-})$. 
To compare an inner product of classes as in (i) of Proposition~\ref{prop-3.6}, we would want to have $\ph_1\tt\ph_2$ matching $\ph_1'\tt\ph_2'$ as well. 
On the other hand, we have the sequence of surjective maps
$$S(V^n_{m,\pm})\tt S(V^n_{m,\pm}) = S(V^{\mbold}_{m,\pm}) \lra R_{\mbold}(V_{m,\pm}) \lra R_{\mbold}(V_{\mbold,\pm}),$$
so that there can be no matching function when the image of $\ph_1\tt\ph_2$ in $R_{\mbold}(V_{\mbold,\pm})$
is non-zero.  This produces a supply of examples where the tensor products of matching functions are not matching.

\end{rem}

\end{document}